\theoremstyle{plain}
\newtheorem{theorem}{Theorem}[section]
\newtheorem{corollary}[theorem]{Corollary}
\newtheorem{lemma}[theorem]{Lemma}
\theoremstyle{definition}
\theoremstyle{remark}
\newtheorem{remark}{Remark}[section]
\newtheorem{example}{Example}[section]
\numberwithin{equation}{section}
\begin{document}

\title[Non-planar interfaces]{An investigation of non-planar austenite-martensite interfaces}

\author{Konstantinos Koumatos}
\address{Konstantinos Koumatos: Mathematical Institute, University of Oxford, 24--29 St Giles', Oxford OX1 3LB, United Kingdom.}
\email{koumatos@maths.ox.ac.uk}

\author{John M. Ball}
\address{John M. Ball: Mathematical Institute, University of Oxford, 24--29 St Giles', Oxford OX1 3LB, United Kingdom.}
\email{ball@maths.ox.ac.uk}

\begin{abstract}
Motivated by experimental observations on CuAlNi single crystals, we present a theoretical investigation of non-planar austenite-martensite interfaces. 
Our analysis is based on the nonlinear elasticity model for martensitic transformations and we show that, under 
suitable assumptions on the lattice parameters, non-planar interfaces are possible, in particular for transitions with cubic austenite.
\vspace{4pt}

\noindent\textsc{Keywords:} austenite-martensite interfaces; non-classical; non-planar.
\vspace{2pt}

\noindent\textsc{MSC (2010): 74B20, 74N15.}
\end{abstract}

\maketitle

\section{Introduction}

A classical austenite-martensite interface is a plane - the habit plane - separating undistorted austenite from a simple laminate of martensite, i.e.~a 
region where the deformation gradient jumps between two constant matrices, say $A$ and $B$, on alternating bands of width $\lambda$ and 
$1-\lambda$, respectively, where $\lambda\in(0,1)$. These interfaces have been broadly studied and are well understood. On the other hand, the nonlinear 
elasticity model for martensitic transformations (see e.g.~\cite{bj87,bj92}) allows for interfaces separating undistorted austenite from more 
complicated microstructures of martensite; such interfaces are broadly referred to as non-classical.

Seiner and Landa \cite{seinercurved}, observed such non-classical interfaces in a CuAlNi shape-memory alloy, in which the martensite 
consists of two laminates - a compound and a Type-II twin\footnote{The terminology is not important for our purposes and the reader is referred to 
e.g.~\cite{seinercurved}} - crossing each other; this morphology is usually referred to as parallelogram or twin-crossing microstructure. More strikingly, 
the volume fraction, say $\Lambda$, of the compound twin varied as a function of position, resulting in a non-planar habit surface.

In \cite{icomat08}, an analysis was provided for the macroscopically homogeneous case, i.e.~when the volume fractions of the two crossing laminates remain constant. 
It was shown that the observed non-classical austenite-martensite interface is compatible in the sense that, under restrictions on the lattice 
parameters, given any compound volume fraction $\Lambda\in(0,1)$, there exist precisely two Type-II volume fractions $\lambda$ ensuring continuity of 
the overall deformation across a planar interface; see Fig.~\ref{fig:volfracs}.

\begin{figure}[ht]
	\centering
	\def\svgwidth{0.6\columnwidth}
	\begingroup
    \setlength{\unitlength}{\svgwidth}
  \begin{picture}(1,0.8)%
    \put(0,0){\includegraphics[width=\unitlength]{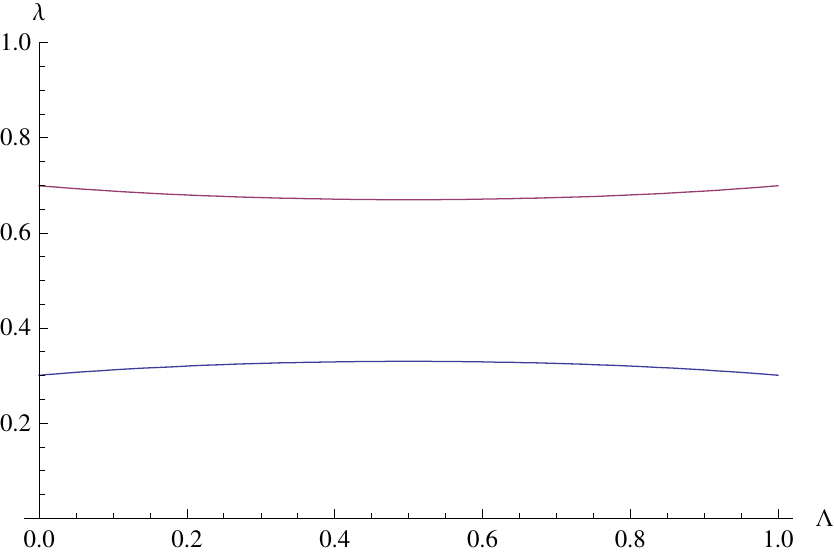}}%
    \put(0.3,0.6){\color[rgb]{0,0,0}\makebox(0,0)[lb]{\smash{$\lambda^2-\lambda=\frac{a_o+a_2(\Lambda^2-\Lambda)}{a_1+a_3(\Lambda^2-\Lambda)}$}}}%
  \end{picture}%
\endgroup
	\caption{Type-II volume fractions $\lambda$ that make the interface compatible plotted against the compound volume fraction $\Lambda\in[0,1]$; 
$a_i$, $i=0,\ldots3$ are functions of the lattice parameters.}
	\label{fig:volfracs}
\end{figure}

Equivalently, the continuity of the overall deformation across the planar interface can be expressed in terms of Hadamard's jump condition, i.e.~that 
there exist vectors $b$, $m$ and a rotation $R$ - all being functions of the volume fractions $\lambda$, $\Lambda$ - such that
\begin{equation}
\label{eq:compatibility}
 R(\lambda,\Lambda)M(\lambda,\Lambda)=\mathbf{1}+b(\lambda,\Lambda)\otimes m(\lambda,\Lambda),
\end{equation}
where $M(\lambda,\Lambda)$ denotes the macroscopic deformation gradient corresponding to the parallelogram microstructure with volume fractions $\lambda$, $\Lambda$. In particular, $m(\lambda,\Lambda)$ is the normal to the habit plane and varying the volume fraction $\Lambda$ (as in the experimental observations) forces 
the habit plane normal to vary accordingly, giving some insight into the inhomogeneous case; see Fig.~\ref{fig:normals}. However, explicit attempts to 
treat the inhomogeneous microstructure analytically proved to be either intractable, due to the algebraic complexity of the cubic-to-orthorhombic 
transition of CuAlNi, or in some cases seemingly impossible.

\begin{figure}[ht]
\centering
\includegraphics[scale=0.6]{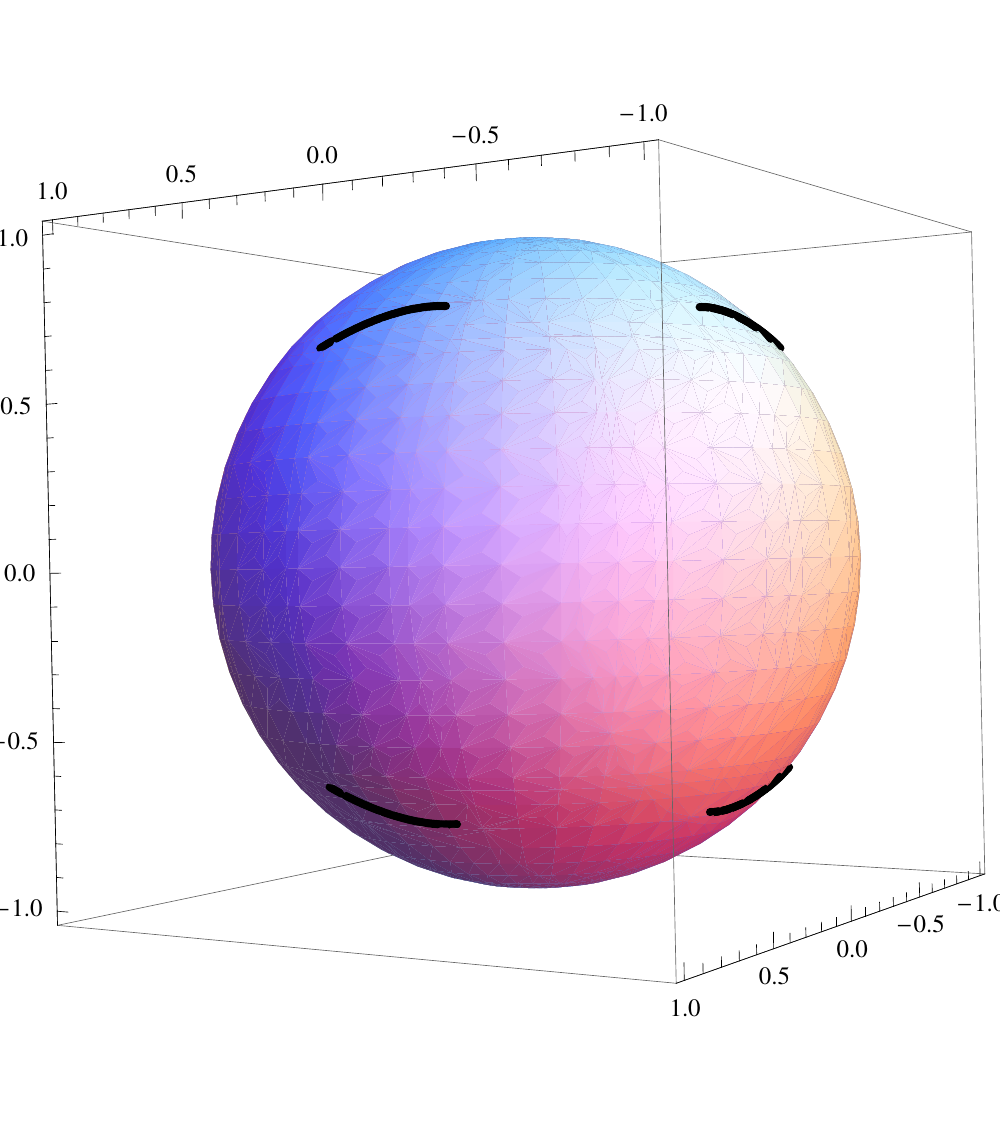} 
\caption{Plot of the unit normals $m(\lambda,\Lambda)$ in the compatibility equation (\ref{eq:compatibility}) for $(\lambda,\Lambda)$ as in 
Fig.~\ref{fig:volfracs}.}
\label{fig:normals}
\end{figure}

In this paper, a theoretical approach is followed in order to investigate the possibility of non-planar austenite-martensite interfaces. It is shown that 
such interfaces are possible within the nonlinear elasticity model for all martensitic transformations with cubic austenite. 
In Section~\ref{sec:model}, the nonlinear elasticity model is briefly introduced and non-classical interfaces are 
explained in greater depth. Our results are presented in Section~\ref{sec:results}. In particular, Theorem~\ref{lemmamodifiedjump} provides a 
modification of Hadamard's jump condition allowing for non-planar interfaces and, in Lemma~\ref{lemma:curvedinterface}, we present a general 
construction of non-planar interfaces. This construction is only possible under the assumption that the austenitic well is rank-one connected to the 
interior of the quasiconvex hull of the martensitic wells relative to a determinant constraint; see Section~\ref{sec:model} for the terminology. In Corollary~\ref{corollary:interiorpointcubicaustenite} we show that this non-trivial assumption is satisfied in particular whenever the austenite is cubic, under appropriate restrictions on the lattice parameters.

\section{Nonlinear elasticity model}
\label{sec:model}

In the nonlinear elasticity model microstructures are identified with weak$\ast$ limits of minimizing sequences (assumed bounded in 
$W^{1,\infty}(\Omega,\mathbb{R}^3)$) for a total free energy of the form:
$$I_\theta(y)=\int_{\Omega}\varphi(Dy(x),\theta)\,\mathrm{d}x.$$
We note that interfacial energy contributions are ignored, resulting in the prediction of infinitely fine microstructure. 
Above, $\Omega$ represents the reference configuration of undistorted austenite at the transformation temperature $\theta_{c}$ and $y(x)$ denotes the 
deformed position of the particle $x\in\Omega$. The free-energy function $\varphi(F,\theta)$ depends on the deformation gradient $F\in\mathbb{R}^{3\times3}$ 
and the temperature $\theta$, where $\mathbb{R}^{3\times3}$ denotes the space of real $3\times3$ matrices. By frame indifference, 
$\varphi(RF,\theta)=\varphi(F,\theta)$ for all $F$, $\theta$ and for all rotations $R$; that is for all $3\times3$ matrices in 
$${\rm SO}(3)=\left\{R:R^{T}R=\mathbf{1}, \det{R}=1\right\}.$$
Also, by material symmetry $\varphi(FQ,\theta)=\varphi(F,\theta)$ for all $Q\in\mathcal{P}^a$, where $\mathcal{P}^a\subset {\rm SO}(3)$ denotes the symmetry 
group of the austenite, e.g~for cubic austenite $\mathcal{P}^a=\mathcal{P}^{24}$ consists of the 24 rotations mapping the cube back to itself.
Without loss of generality we assume that $\min_{F} \varphi(F,\theta)=0$ and we denote by
\begin{equation}
K_{\theta}=\lbrace F:\varphi\left(F,\theta\right)=0\rbrace\nonumber
\end{equation}
the zero set of $\varphi(\cdot,\theta)$. Assuming a transformation strain $U_1(\theta)$ and using frame indifference and material symmetry, we 
suppose that
\[K_{\theta}=\left\{\begin{array}{ll}
\alpha\left(\theta\right){\rm SO}\left(3\right)&\,\theta>\theta_{c}\mbox{ (austenite)}\\
{\rm SO}\left(3\right)\cup\bigcup^{N}_{i=1}{\rm SO}\left(3\right)U_{i}\left(\theta_c\right)&\,\theta=\theta_{c}\\
\bigcup^{N}_{i=1}{\rm SO}\left(3\right)U_{i}\left(\theta\right)&\,\theta<\theta_{c}\mbox{ (martensite)},
\end{array}\right.\]
where the positive definite, symmetric matrices $U_{i}\left(\theta\right)\in\left\{R^TU_1(\theta)R:R\in\mathcal{P}^a\right\}$ correspond to the $N$ 
distinct variants of martensite and $\alpha(\theta)$ is the thermal expansion coefficient of the austenite with $\alpha(\theta_{c})=1$.

As a simple illustration of non-classical interfaces, let us restrict attention to planar ones. Then, a non-classical planar austenite-martensite interface 
$\left\{x\cdot m=k\right\}$ at the critical temperature $\theta_c$ corresponds to a choice of habit-plane normal $m$ for which there exists an energy-minimizing sequence of deformations $y^{j}$ 
such that, as $j\rightarrow\infty$,
\begin{eqnarray}
\label{convinmeasure1}
Dy^j&\rightarrow&{\rm SO}(3)\,\,\mbox{in measure for $x\cdot m<k$,}\\
\label{convinmeasure2}
Dy^j&\rightarrow&K:=\bigcup^{N}_{i=1}{\rm SO}(3)U_i(\theta_c)\,\,\mbox{in measure for $x\cdot m>k$,}
\end{eqnarray}
i.e.~$y^j$ corresponds to a pure phase of austenite and a general zero-energy microstructure of martensite on either side of the interface 
$\left\{x\cdot m=k\right\}$. Above, the convergence in measure of the sequence $Dy^j$ to a compact set $L$ means that for all open neighbourhoods $U$ of 
$L$ in $\mathbb{R}^{3\times3}$,
$$\lim_{j\rightarrow\infty}\mathrm{meas}\left\{x\in\Omega : Dy^j(x)\notin U\right\}=0.$$
In fact, this is equivalent to the Young measure $\nu=(\nu_x)_{x\in\Omega}$ generated by $Dy^j$ being supported in $L$ (see e.g.~\cite{mullernotes} for details).

Without loss of generality, (\ref{convinmeasure1}) reduces to $Dy^{j}\left(x\right)\longrightarrow\mathbf{1}$ a.e.~in $\Omega$. As for the martensitic 
region, $x\cdot m>k$, let us assume for simplicity that the martensitic microstructure is homogeneous; that is, for $x\cdot m>k$, the macroscopic 
deformation gradient $F=Dy(x)$, i.e.~the weak$\ast$ limit in $L^{\infty}(\Omega,\mathbb{R}^{3\times3})$ of $Dy^j$ satisfying (\ref{convinmeasure2}), is 
independent of $x$. We note that such matrices $F$ are precisely the elements of the \textit{quasiconvex hull} of the set $K$, denoted by $K^{qc}$. (In general, for a compact set $L\subset\mathbb{R}^{3\times3}$, say, its quasiconvex hull $L^{qc}$ is given by the set of matrices $F$ such that there exists a 
sequence of deformations $z^{j}$ uniformly bounded in $W^{1,\infty}(\Omega,\mathbb{R}^3)$ with $z^{j}\stackrel{\ast}{\rightharpoonup}Fx$ and 
$Dz^{j}(x)\rightarrow L$ in measure\footnote{The quasiconvex hull of a compact set $L$ of matrices can be equivalently defined in various ways (see e.g.~\cite{mullernotes}) but 
the above definition will suffice for our purposes.}.)

To make the overall deformation continuous across the planar interface one needs to satisfy the Hadamard jump condition as in (\ref{eq:compatibility}). 
Thus, for planar interfaces between austenite and a homogeneous microstructure of martensite, accounting for non-classical interfaces becomes equivalent to 
establishing rank-one connections between the austenite and the set $K^{qc}$, i.e.~finding vectors $b,\,m$ such that
\begin{equation}
\label{eq:twinningkqc}
\mathbf{1}+b\otimes m\in K^{qc},
\end{equation}
where, by frame indifference, we have chosen the identity matrix $\mathbf{1}$ to represent the austenite energy well. However, in this context, the only known 
characterization of a quasiconvex hull is for two martensitic wells, that is when $K={\rm SO}(3)U_{1}\cup {\rm SO}(3)U_{2}$, in which case any $F\in K^{qc}$ can be 
obtained as the macroscopic deformation gradient of a double laminate (see \cite{bj92}). Using this characterization Ball \& Carstensen were able to analyze planar non-classical interfaces for cubic-to-tetragonal transformations and the reader is referred to \cite{jmbcc97} for details.

In the present paper, we wish to work in a more general setting where we allow the martensitic microstructure to depend on the position vector and the 
interface to be represented by a general ($C^1$) surface $\Gamma$. In this case, one still needs to require that (\ref{convinmeasure1}) and 
(\ref{convinmeasure2}) hold on either side of $\Gamma$ but, since we allow the macroscopic deformation gradient $Dy$ of the martensitic microstructure 
to depend on $x\in\Omega$, we require that $Dy(x)\in K^{qc}$ a.e.~in the martensitic region. However, the compatibility condition across the interface 
$\Gamma$ no longer suffices and needs to be generalized. An appropriate generalization is provided in the subsequent section along with statements and 
proofs of our results.

\section{Construction of non-planar interfaces}
\label{sec:results}

The first step to constructing a non-planar interface between austenite and an inhomogeneous microstructure of martensite is an appropriate 
generalization of Hadamard's jump condition; this is Theorem~\ref{lemmamodifiedjump}. However, before stating and proving the generalized jump 
condition, let us first clarify notation and terminology, as well as prove an auxiliary lemma (Lemma~\ref{piecewisepath}) used in the proof of 
Theorem~\ref{lemmamodifiedjump}.\vspace{0.2cm}

\noindent\textbf{Notation.}
\begin{itemize}
\item The term \textit{domain} is reserved for an open and connected set in $\mathbb{R}^d$ throughout this paper.
\item A function $y:\Omega\rightarrow\mathbb{R}$ belongs to the space $C^{1}\left(\overline{\Omega}\right)$ if $y\in C^{1}\left(\Omega\right)$ 
and $y$ can be extended to a continuously differentiable function on an open set containing $\overline{\Omega}$. The space $C^{1}\left(\overline{\Omega},\mathbb{R}^{d}\right)$ consists of those maps $y=\left(y_{1},\ldots ,y_{d}\right):\Omega\rightarrow\mathbb{R}^{d}$ such that $y_{i}\in C^{1}\left(\overline{\Omega}\right)$ for all $i=1,\ldots ,d$.
\item Let $k>0$. A domain $\Omega$ is of class $C^{k}$ if for each $\xi\in\partial\Omega$ there exist $r>0$, a Cartesian coordinate 
system in $B(\xi,r)$, with coordinates $(\bar{x},x_d)$ where $\bar{x}=(x_1,\ldots,\,x_{d-1})$, and a $C^{k}$ function 
$g:\mathbb{R}^{d-1}\rightarrow\mathbb{R}$ such that
\begin{eqnarray*}
\Omega\cap\,B(\xi ,r)&=&\left\{x\in\,B(\xi ,r)\,:\,x_d>g(\bar{x})\right\},\\
\partial\Omega\cap\,B(\xi ,r)&=&\left\{x\in\,B(\xi ,r)\,:\,x_d=g(\bar{x})\right\}.
\end{eqnarray*}
\item A $\left(d-1\right)$-surface $\Gamma$ is a relatively compact manifold of dimension $d-1$ embedded in $\mathbb{R}^{d}$ such that $\Gamma={\rm int}\Gamma$, in the sense that every point of $\Gamma$ has an open neighbourhood in $\Gamma$ homeomorphic to a ball in $\mathbb{R}^{d-1}$. We say that a $(d-1)$-surface $\Gamma$ is of class $C^{k}$ if for each $\xi\in\Gamma$ there exist $r>0$, a 
Cartesian coordinate system in $B(\xi ,r)$, with coordinates $(\bar{x},x_d)$, and a $C^{k}$ function $g:\mathbb{R}^{d-1}\rightarrow\mathbb{R}$ such that
\[\Gamma\cap\,B(\xi ,r)=\left\{x\in\,B(\xi ,r)\,:\,x_d=g(\bar{x})\right\}.\]
We note that the reference to the dimension may be dropped when this is obvious. 
\end{itemize}

\begin{lemma}
Suppose that $\Gamma\subset\mathbb{R}^{d}$ is a connected $\left(d-1\right)$-surface which is of class $C^{1}$ and let $x^{0}\in\Gamma$. Then, for all 
$x\in\Gamma$ there exists a continuous, piecewise continuously differentiable path $\gamma:\left[0,1\right]\rightarrow\Gamma$ such that $\gamma\left(0\right)=x^{0}$ 
and $\gamma\left(1\right)=x$.
\label{piecewisepath}
\end{lemma}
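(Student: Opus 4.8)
The plan is to run the standard \emph{clopen} argument that upgrades topological connectedness of $\Gamma$ to piecewise-$C^1$ path-connectedness, using the local graph structure to manufacture the required short arcs. Fix $x^0\in\Gamma$ and let $A\subseteq\Gamma$ be the set of all points $x$ that can be joined to $x^0$ by a continuous, piecewise continuously differentiable path lying in $\Gamma$. The constant path witnesses $x^0\in A$, so $A$ is nonempty. The goal is to show that $A$ is both open and closed in $\Gamma$; connectedness of $\Gamma$ then forces $A=\Gamma$, which is exactly the assertion of the lemma.

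The key local ingredient is that every $\xi\in\Gamma$ has a neighbourhood in $\Gamma$ any two points of which are joined by a single $C^1$ arc contained in $\Gamma$. Indeed, by the definition of a $C^1$ $(d-1)$-surface there are $r>0$, coordinates $(\bar x,x_d)$, and a $C^1$ function $g$ with $\Gamma\cap B(\xi,r)=\{x : x_d=g(\bar x)\}$, so that the graph map $\Phi(\bar x)=(\bar x,g(\bar x))$ is a $C^1$ parametrization of this piece of $\Gamma$. Shrinking to a ball $B(\bar\xi,\rho)$ in the base coordinates whose graph still lies in $B(\xi,r)$, I obtain a neighbourhood $U_\xi:=\Phi\bigl(B(\bar\xi,\rho)\bigr)$ of $\xi$ in $\Gamma$. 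Since a ball is convex, any two of its points are joined by a straight segment, and the image of this segment under $\Phi$ is a $C^1$ arc in $\Gamma$ connecting the corresponding points of $U_\xi$. Thus $U_\xi$ is $C^1$-arc-connected.

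With this in hand the two halves of the argument are symmetric. For openness, if $x\in A$ take the neighbourhood $U_x$ above: any $y\in U_x$ is joined to $x$ by a $C^1$ arc in $\Gamma$, while $x$ is joined to $x^0$ by a piecewise-$C^1$ path; concatenating these (and reparametrizing onto $[0,1]$) yields a piecewise-$C^1$ path from $x^0$ to $y$, so $U_x\subseteq A$ and $A$ is open. For closedness I instead show that $\Gamma\setminus A$ is open by the same device: if $x\notin A$ but some $y\in U_x$ lay in $A$, then joining $x$ to $y$ within $U_x$ and $y$ to $x^0$ would place $x$ in $A$, a contradiction; hence $U_x\subseteq\Gamma\setminus A$. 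Connectedness of $\Gamma$ now gives $A=\Gamma$.

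I expect the only point genuinely requiring care to be the local step: one must choose the chart neighbourhood small enough that its base projection is convex (a ball suffices) so that lifted segments remain inside $\Gamma\cap B(\xi,r)$, and one must note that $\Phi$ carries the smooth segment to a genuinely $C^1$ arc precisely because $g\in C^1$. The remaining bookkeeping—that the concatenation of two paths each defined on $[0,1]$ is continuous and $C^1$ off the finitely many junction points, and can be reparametrized onto a single interval $[0,1]$—is routine; crucially each concatenation appends only one arc, so the number of pieces stays finite and the resulting path is honestly piecewise continuously differentiable.
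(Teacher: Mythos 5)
Your proof is correct, and it takes a genuinely different route from the paper. The paper first invokes the fact that a connected manifold is path-connected (citing Lee) to obtain a continuous path from $x^0$ to $x$, then uses compactness of the path's image to extract a finite cover by chart balls, orders these along the path, and replaces the path piece by piece with $C^1$ arcs obtained by lifting straight segments through the local graph parametrizations. You instead run the clopen argument: the set $A$ of points reachable from $x^0$ by piecewise-$C^1$ paths is nonempty, and the same local ingredient both proofs share --- that $\Phi(\bar x)=(\bar x,g(\bar x))$ lifts segments in a convex base neighbourhood to $C^1$ arcs in $\Gamma$ --- shows $A$ and its complement are both open, so connectedness forces $A=\Gamma$. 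Your approach is more self-contained: it does not need the external path-connectedness result, nor the covering/ordering bookkeeping (which in the paper is slightly delicate, e.g.\ arranging that consecutive balls meet in points of $\Gamma$ and that lifted segments stay inside each ball), and finiteness of the number of $C^1$ pieces is automatic since each concatenation appends a single arc. What the paper's route buys is mainly geometric transparency --- one literally smooths a given continuous path --- but as a matter of logical economy your clopen argument is the cleaner of the two. One small point worth making explicit in a final write-up: $U_\xi=\Phi\bigl(B(\bar\xi,\rho)\bigr)$ is indeed open in $\Gamma$, because the projection $(\bar x,x_d)\mapsto\bar x$ restricted to $\Gamma\cap B(\xi,r)$ is a homeomorphism onto its image, so $U_\xi$ is the preimage of the open ball $B(\bar\xi,\rho)$ under a continuous map defined on an open subset of $\Gamma$.
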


\begin{proof}
Let $x^0$, $x\in\Gamma$; since $\Gamma$ is a connected manifold, it is also path-connected (see e.g.~\cite{lee_manifolds}) and there exists a continuous path connecting $x^0$ and $x$. Note that the set of points on the path is compact since it is closed and contained in a relatively compact set. Also, $\Gamma$ being $C^1$, we can cover the path with balls $B(x,r_x )$ centred at $x$ of radius $r_x$ such that $\Gamma\cap B(x,r_x)$ is the graph of a $C^1$ function $g_x:\mathbb{R}^{d-1}\rightarrow\mathbb{R}$.

Extracting a finite subcover, we may assume that there are points $x^0,\ldots, x^N=x\in\Gamma$ such that the balls $B(x^j,r_j)$, $j=0,1,\ldots,\,N$, cover the path and that there exist local coordinate 
systems, say $(\bar{x}^{c_j},x^{c_j}_d)$ with $\bar{x}^{c_j}=(x^{c_j}_1,\ldots,\,x^{c_j}_{d-1})$, and $C^1$ functions $g_j:\mathbb{R}^{d-1}\rightarrow\mathbb{R}$ such 
that
\[\Gamma\cap\,B(x^j,r_j)=\left\{x\in\,B(x^j,r_j)\,:\,x^{c_j}_d=g_j(\bar{x}^{c_j})\right\}.\]
Above, the superscript $c_j$ denotes the coordinate system in the ball $B(x^j,r_j)$. Trivially, we may also assume that for $j=0,1,\ldots,\,N-1$, there exists some
$y^{j+1}\in\,B(x^j,r_j)\cap\,B(x^{j+1},r_{j+1})$.

For $j=0,\ldots,\,N$, we may define $N$ continuously 
differentiable paths $\gamma_j$ in each $B(x^j,r_j)$ such that $\gamma_j(0)=y^{j}$ and $\gamma_j(1)=y^{j+1}$, where we make the identification 
$x^{0}=y^{0}$ and $x^N=y^{N+1}$, by
\[\gamma_j(t)=\left(\bar{y}^{j,c_j}+t\left(\bar{y}^{j+1,c_j}-\bar{y}^{j,c_j}\right),g\left(\bar{y}^{j,c_j}+t\left(\bar{y}^{j+1,c_j}-
\bar{y}^{j,c_j}\right)\right)\right),\]
where the superscript $j,c_j$ denotes the point $y^j$ expressed in the coordinate system $c_j$ in $B(x^j,r_j)$. Then, the composition of
$\gamma_0,\ldots,\,\gamma_{N}$ gives a continuous, piecewise continuously differentiable path $\gamma:\left[0,1\right]\rightarrow\Gamma$ such that $\gamma(0)=x^0$ and $\gamma(1)=x$.
\end{proof}

We may now state and prove the generalized jump condition:

\begin{theorem}
\label{lemmamodifiedjump}
Let $d\geq 2$. Let $\Omega\subset\mathbb{R}^{d}$ be a bounded $C^{1}$ domain and suppose that $\Omega=\Omega^{+}\cup\Gamma\cup\Omega^{-}$ where 
$\Omega^{+},\,\Omega^{-}$ are disjoint open sets and $\Gamma=\Omega\cap\partial\Omega^{+}=\Omega\cap\partial\Omega^{-}$ is a $\left(d-1\right)$-surface 
of class $C^{1}$. Further, assume that $y^{\pm}\in C^{1}\left(\overline{\Omega^{\pm}},\mathbb{R}^{d}\right)$ and let 
$n\in C\left(\Gamma,\mathbb{R}^{d}\right)$ be the outward unit normal to $\Gamma$ with respect to $\Omega^{+}$. If there exists a map 
$z\in W^{1,\infty}\left(\Omega,\mathbb{R}^{d}\right)$ such that
\begin{equation}\label{eq:map_z}
Dz\left(x\right)=\left\lbrace\begin{array}{rcl}
Dy^{+}\left(x\right),& &x\in\Omega^{+}\\
Dy^{-}\left(x\right),& &x\in\Omega^{-},
\end{array}\right.
\end{equation}
then, for some $a\in C\left(\Gamma,\mathbb{R}^{d}\right)$ and all $x\in\Gamma$,
\begin{equation}
Dy^{+}\left(x\right)=Dy^{-}\left(x\right)+a\left(x\right)\otimes n\left(x\right),
\label{eq:jumpcondition}
\end{equation}
Conversely, suppose that $\Gamma$ is connected or $\Omega$ is simply connected. If  \eqref{eq:jumpcondition} holds then there exists a map $z\in W^{1,\infty}\left(\Omega,\mathbb{R}^{d}\right)$ satisfying \eqref{eq:map_z}. 
\end{theorem}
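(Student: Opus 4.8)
The plan is to phrase both implications in terms of the globally defined bounded matrix field $G\in L^{\infty}(\Omega,\mathbb{R}^{d\times d})$ given by $G=Dy^{+}$ on $\Omega^{+}$ and $G=Dy^{-}$ on $\Omega^{-}$; the map $z$ is then precisely a potential for $G$, i.e.\ a map with $Dz=G$. For the forward implication I would argue as follows. Since $z\in W^{1,\infty}$ it has a Lipschitz, hence continuous, representative, and on each connected component of $\Omega^{\pm}$ the identity $D(z-y^{\pm})=0$ forces $z=y^{\pm}+\mathrm{const}$; thus $z$ agrees with a $C^{1}$ function up to $\Gamma$ from either side, with boundary gradient $Dy^{\pm}$. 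Fixing $x\in\Gamma$ and any $C^{1}$ curve $\gamma$ in $\Gamma$ through $x$, I differentiate $t\mapsto z(\gamma(t))$ and compute the derivative from the two sides to get $Dy^{+}(x)\gamma'(0)=Dy^{-}(x)\gamma'(0)$, so that $Dy^{+}(x)-Dy^{-}(x)$ annihilates every tangent vector, i.e.\ the whole hyperplane $n(x)^{\perp}$. A matrix killing $n^{\perp}$ has the form $a\otimes n$, and setting $a(x)=(Dy^{+}(x)-Dy^{-}(x))n(x)$ yields \eqref{eq:jumpcondition}; continuity of $a$ is inherited from that of $Dy^{\pm}$ and $n$.

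For the converse, the starting observation is that \eqref{eq:jumpcondition} says exactly that the tangential derivative of the trace difference $y^{+}-y^{-}$ vanishes on $\Gamma$: along a $C^{1}$ curve $\gamma\subset\Gamma$ one has $\frac{d}{dt}(y^{+}-y^{-})(\gamma(t))=(a\otimes n)\gamma'(t)=a\,(n\cdot\gamma'(t))=0$, since $\gamma'$ is tangent. When $\Gamma$ is connected, Lemma~\ref{piecewisepath} lets me join any two points of $\Gamma$ by a piecewise $C^{1}$ path inside $\Gamma$, so $y^{+}-y^{-}\equiv c$ is constant on $\Gamma$. I then define $z:=y^{+}$ on $\Omega^{+}$ and $z:=y^{-}+c$ on $\Omega^{-}$, which is continuous across $\Gamma$. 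To confirm $z\in W^{1,\infty}$ with $Dz=G$ as in \eqref{eq:map_z}, I would test against $\psi\in C_{c}^{\infty}(\Omega)$ and integrate by parts over $\Omega^{+}$ and $\Omega^{-}$ separately; the two interfacial integrals on $\Gamma$ combine, with opposite normals, into a single term carrying the factor $z^{+}-z^{-}$, which vanishes by construction, leaving $Dz=G\in L^{\infty}$ in the distributional sense.

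When instead $\Omega$ is simply connected but $\Gamma$ may be disconnected, a single shift no longer works, as $y^{+}-y^{-}$ is only locally constant on $\Gamma$. Here I would show directly that $G$ is curl-free as a distribution on $\Omega$: it is curl-free on each $\Omega^{\pm}$, being a gradient there, and \eqref{eq:jumpcondition} guarantees that the singular part of the distributional curl supported on $\Gamma$ vanishes, since the jump $[G]=a\otimes n$ has each row parallel to $n$ (equivalently, the tangential part of each row of $G$ is continuous across $\Gamma$). The potential $z\in W^{1,\infty}$ then follows from the Poincar\'e lemma for curl-free $L^{\infty}$ fields on a simply connected domain. I expect the main obstacles to be twofold: in the forward direction, the care needed to upgrade the bare $W^{1,\infty}$ regularity of $z$ to genuine $C^{1}$-up-to-$\Gamma$ traces, so that the tangential-matching argument is rigorous rather than formal; and in the simply connected case, translating the pointwise condition \eqref{eq:jumpcondition} into vanishing of the singular part of $\mathrm{curl}\,G$ and invoking the correct low-regularity Poincar\'e lemma. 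The connected-$\Gamma$ case, by contrast, is elementary once Lemma~\ref{piecewisepath} provides constancy of $y^{+}-y^{-}$.
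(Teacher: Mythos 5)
Your proposal is correct, and for the converse direction --- the substantive part of the theorem --- it follows the paper's proof essentially verbatim: for connected $\Gamma$ you integrate the jump condition along the piecewise-$C^1$ paths supplied by Lemma~\ref{piecewisepath} to conclude that $y^+-y^-$ is constant on $\Gamma$ and then glue (the paper normalizes this constant to zero at a base point rather than carrying $c$ along, which is immaterial); for simply connected $\Omega$ you argue, exactly as the paper does, that \eqref{eq:jumpcondition} makes the glued field curl-free in the sense of distributions --- the singular part of the curl on $\Gamma$ vanishes because the jump is $a\otimes n$ --- and then invoke a low-regularity Poincar\'e lemma; the paper makes this last step precise by citing Theorem 1 of Amrouche--Ciarlet--Ciarlet to get a distributional potential and Maz'ya's result (a distribution with $L^\infty$ derivatives is a $W^{1,\infty}$ function) to upgrade it, which is precisely the ``correct low-regularity Poincar\'e lemma'' you flagged as the remaining technical point. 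The one place you genuinely diverge is the necessity direction: the paper disposes of it by appealing to the classical Hadamard jump condition for continuous piecewise $C^1$ maps, proved by blow-up about a point of $\Gamma$ (citing Ball--Carstensen), whereas you give a direct, self-contained argument --- $z$ agrees with $y^\pm$ up to locally constant shifts, so its restriction to $\Gamma$ may be differentiated along curves in $\Gamma$ from either side, forcing $Dy^+(x)-Dy^-(x)$ to annihilate the tangent space $n(x)^\perp$ and hence to have the form $a(x)\otimes n(x)$ with $a=(Dy^+-Dy^-)n$ continuous. This is a legitimate and arguably more elementary substitute for the blow-up argument; note that it implicitly uses the fact, recorded in the paper's remark following the theorem, that $\Omega^\pm$ locally lie on either side of $\Gamma$, so that the two one-sided representations of $z$ are available near each point of $\Gamma$. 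Your integration-by-parts verification that the glued map is a genuine $W^{1,\infty}$ potential also fills in a step the paper leaves implicit; with a partition of unity subordinate to a finite cover of $\mathrm{supp}\,\psi\cap\Gamma$ by graph neighbourhoods, it is rigorous.
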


\begin{remark}
Note that, under the hypotheses of Theorem~\ref{lemmamodifiedjump}, $\Omega^{+}$ and $\Omega^{-}$ locally lie on either side of 
the surface $\Gamma$. Also, for the construction of a non-planar austenite-martensite interface we are interested in the special case where, say, 
$Dy^{-}=\mathbf{1}$ represents the pure phase of austenite, whereas $Dy^{+}$ represents the macroscopic deformation gradient corresponding to a microstructure of martensite, 
i.e.~$Dy^{+}\left(x\right)\in K^{qc}$ a.e.~in $\Omega^{+}$ for some set $K$ of martensitic wells; see Fig.~\ref{fig:modifiedhadamard}.\end{remark}

\begin{figure}[ht]
	\centering
	\def\svgwidth{0.5\columnwidth}
	\begingroup
    \setlength{\unitlength}{\svgwidth}
  \begin{picture}(1,0.8)%
    \put(0,0){\includegraphics[width=\unitlength]{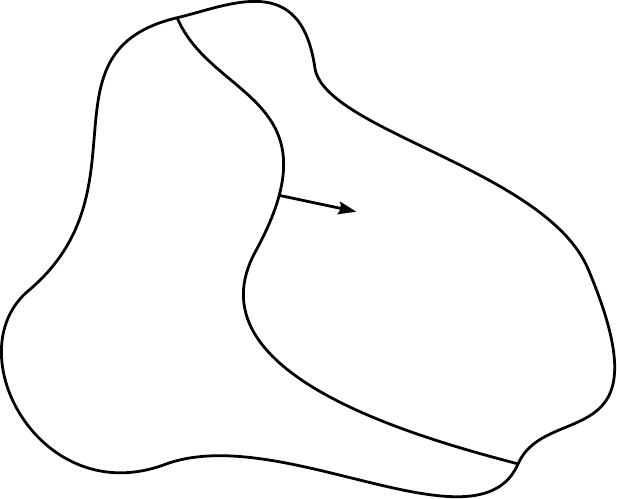}}%
    \put(0.25,0.65){\color[rgb]{0,0,0}\makebox(0,0)[lb]{\smash{$\Omega^{+}$}}}%
    \put(0.43,0.65){\color[rgb]{0,0,0}\makebox(0,0)[lb]{\smash{$\Omega^{-}$}}}%
    \put(0.8,0.1){\color[rgb]{0,0,0}\makebox(0,0)[lb]{\smash{$\Gamma$}}}%
    \put(0.55,0.4){\color[rgb]{0,0,0}\makebox(0,0)[lb]{\smash{$n\left(x\right)$}}}%
    \put(0.1,0.3){\color[rgb]{0,0,0}\makebox(0,0)[lb]{\smash{$Dy\left(x\right)$}}}%
    \put(0.8,0.3){\color[rgb]{0,0,0}\makebox(0,0)[lb]{\smash{$\mathbf{1}$}}}%
  \end{picture}%
\endgroup
	\caption{Schematic depiction of a deformation gradient $Dz$ taking the values $Dy$ and $\mathbf{1}$ on either side of a non-planar interface 
$\Gamma$; for an austenite-martensite interface the macroscopic deformation gradient $Dy\in\,K^{qc}$ a.e.~for some appropriate set $K$ of martensitic wells. The deformation $z$ remains 
continuous across $\Gamma$ provided that $Dy\left(x\right)=\mathbf{1}+a\left(x\right)\otimes n\left(x\right)$ for all $x\in\Gamma$.}
	\label{fig:modifiedhadamard}
\end{figure}

\begin{proof}
The necessity of (\ref{eq:jumpcondition}) for each $x\in \Gamma$ follows from the classical Hadamard jump condition for continuous piecewise $C^1$ maps, which can be proved by blowing-up about $x$ to reduce the case to that for a continuous piecewise affine map (see e.g.~\cite{jmbcc_inprep} for proofs of much more general statements), while the continuity of $a(\cdot)$ follows from that of $Dy^\pm$.

To prove sufficiency, assume first that $\Gamma$ is connected. Let $x_{0}\in\Gamma$ fixed and, by adding an appropriate constant, assume that $y^{+}(x_{0})=y^{-}(x_{0})=0$. It is enough to 
show that $y^{+}\left(x\right)=y^{-}\left(x\right)$, for all $x\in\Gamma$, as the map
\begin{equation}
z\left(x\right)=\left\lbrace\begin{array}{rcl}
y^{+}\left(x\right),& &x\in\Omega^{+}\\
y^{-}\left(x\right),& &x\in\Omega^{-}
\end{array}\right.\nonumber
\end{equation}
is then continuous across $\Gamma$ and $Dz$ has the required form.

Let $x\in\Gamma$; By Lemma~\ref{piecewisepath}, we can define a piecewise continuously differentiable path 
$\gamma:\left[0,1\right]\rightarrow\Gamma$ such that $\gamma(0)=x_{0}$ and $\gamma(1)=x$. Then,
\begin{equation}
y^{+}(x)-y^{-}(x)=\int^{1}_{0}\left[Dy^{+}(\gamma(t))-Dy^{-}(\gamma(t))\right]\dot{\gamma}(t)\,dt.\nonumber
\end{equation}
Note that for a.e.~$t\in\left[0,1\right]$, $\dot{\gamma}(t)$ is tangential to the path and, in particular, perpendicular to the normal at the point 
$\gamma(t)\in\Gamma$, i.e.~$\dot{\gamma}(t)\cdot n(\gamma(t))=0$. On the other hand, we know that for all $x\in\Gamma$, 
$Dy^{+}(x)-Dy^{-}(x)=a(x)\otimes n(x)$ and therefore
\begin{equation}
y^{+}(x)-y^{-}(x)=\int^{1}_{0}\left[n(\gamma(t))\cdot\dot{\gamma}(t)\right]a(\gamma(t))\,dt=0.\nonumber
\end{equation}

On the other hand, assume that $\Omega$ is simply connected. Condition (\ref{eq:jumpcondition}) now ensures that the map 
$F\in L^{\infty}(\Omega, M^{d\times d})$ defined by
\[F\left(x\right)=\left\lbrace\begin{array}{rcl}
Dy^{+}\left(x\right),& &x\in\Omega^{+}\\
Dy^{-}\left(x\right),& &x\in\Omega^{-}
\end{array}\right.\]
is curl-free (in the distributional sense) and Theorem 1 in~\cite{ciarletjr} shows that this is equivalent to the existence of a distribution with a 
distributional derivative given by $F$. Then, Maz'ya in Section 1.1.11, \cite{mazya} shows that a distribution whose derivatives of order $k$ belong 
to an $L^{p}$ space must itself be a function and an element of the Sobolev space $W^{k,p}$; sufficiency follows.
\end{proof}

\begin{remark}
(i) Note that if $\Gamma$ is disconnected and $\Omega$ is not simply connected, the result is in general false. As an example, consider $\Omega=A\times (0,1)$ where $A$ is the annulus
\[\{(x_1,x_2): 1<x_1^2+x_2^2<4\}.\]
Let $\Omega^\pm=\{(x_1,x_2,x_3): \pm x_1<0,\,x_3\in (0,1)\}$ and $\Gamma = \Gamma_1\cup\Gamma_2$ where
\begin{align*}
\Gamma_1 &=\{(x_1,x_2,x_3): x_1=0, x_2>1, x_3\in (0,1)\}\\
\Gamma_2 &=\{(x_1,x_2,x_3): x_1=0, x_2<-1, x_3\in (0,1)\}.
\end{align*}
A planar section perpendicular to the $x_3$ axis is depicted in Fig.~\ref{fig:pathological} below.
\begin{figure}[ht]
	\centering
	\def\svgwidth{0.5\columnwidth}
	\begingroup
    \setlength{\unitlength}{\svgwidth}
  \begin{picture}(1.0,0.9)%
    \put(0,0){\includegraphics[width=\unitlength]{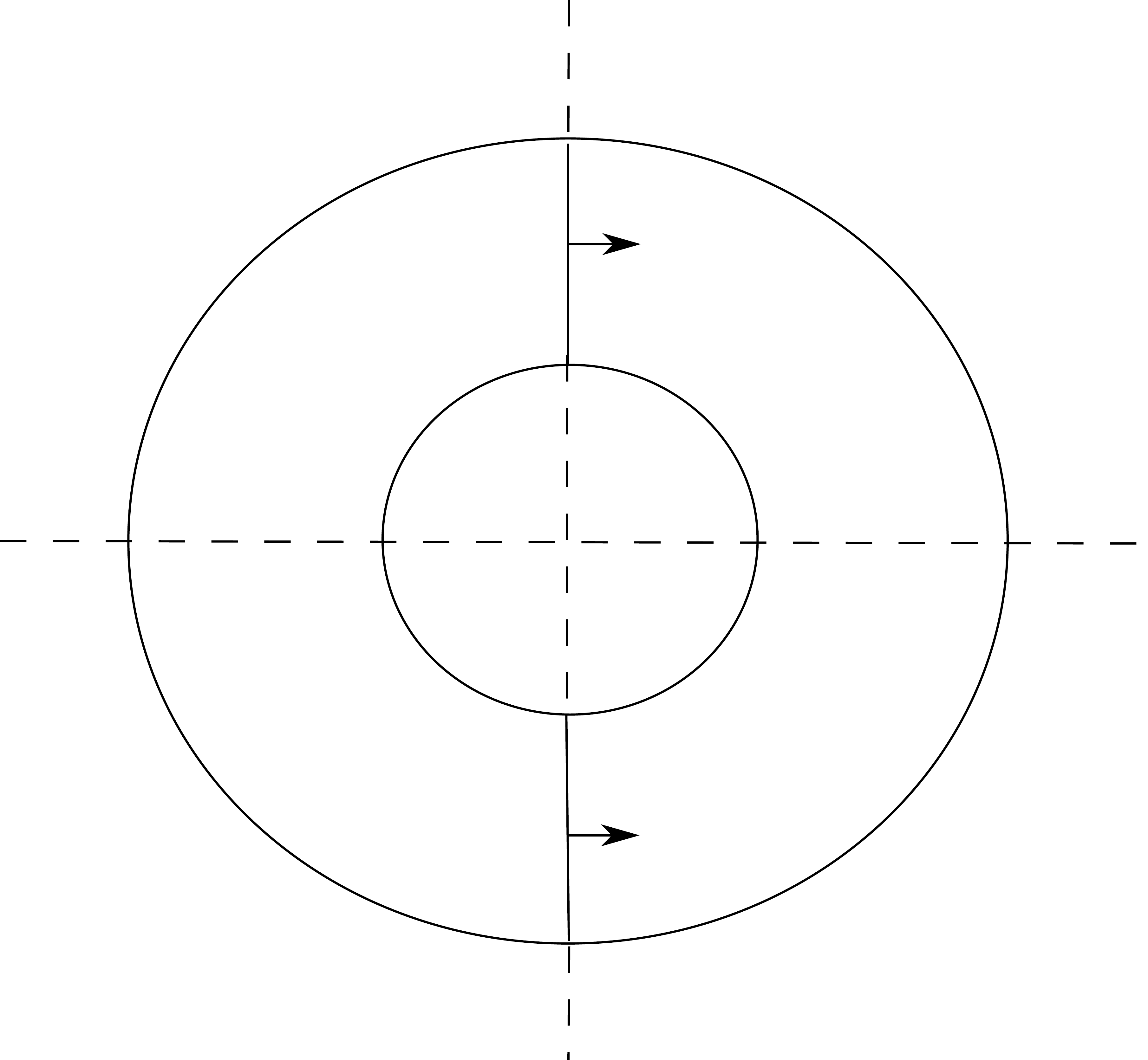}}%
    \put(0.7,0.5){\color[rgb]{0,0,0}\makebox(0,0)[lb]{\smash{$\Omega^{-}$}}}%
    \put(0.2,0.5){\color[rgb]{0,0,0}\makebox(0,0)[lb]{\smash{$\Omega^{+}$}}}%
    \put(0.4,0.2){\color[rgb]{0,0,0}\makebox(0,0)[lb]{\smash{$\Gamma_{2}$}}}%
    \put(0.4,0.7){\color[rgb]{0,0,0}\makebox(0,0)[lb]{\smash{$\Gamma_{1}$}}}%
    \put(0.33,0.9){\color[rgb]{0,0,0}\makebox(0,0)[lb]{\smash{$x_{1}=0$}}}%
    \put(0.9,0.47){\color[rgb]{0,0,0}\makebox(0,0)[lb]{\smash{$x_{2}=0$}}}%
  \end{picture}%
\endgroup
	\caption{The manifold $\Gamma=\Gamma_{1}\cup\Gamma_{2}$ is disconnected, $\Omega$ is not simply connected, and the conclusion of Theorem 3.1 no longer holds.}
	\label{fig:pathological}
\end{figure}
Let $y^{+}(x)=0$ in $\Omega^+$, $y^-(x)=0$ for $x\in\Omega^-\cap\{x_2>1\}$, $y^-(x)=ax_1+e_2$ for $x\in\Omega^-\cap\{x_2<-1\}$ and interpolate smoothly for $x_2\in [-1,1]$, where $a$ is a non-zero vector and $x_1=x\cdot e_1$, $e_{i}$ being the standard basis of $\mathbb{R}^3$; trivially, the compatibility condition (\ref{eq:jumpcondition}) is satisfied across $\Gamma$, with $Dy^+-Dy^-=0$ on $\Gamma_1$ and $Dy^+-Dy^-=a\otimes e_1$ on $\Gamma_2$. Next suppose that there exists $z\in W^{1,\infty}(\Omega,\mathbb{R}^3)$ such that
\begin{equation*}
Dz(x)=\left\{\begin{array}{ll}Dy^+(x),&x\in\Omega^+ \\
Dy^-(x), &x\in\Omega^- \end{array}\right.=\left\{\begin{array}{ll}0,&x\in\Omega^+ \\
Dy^-(x), &x\in\Omega^- .\end{array}\right.
\end{equation*}
Then, since $\Omega^\pm$ are connected, $z(x)=c$ in $\Omega^+$ for some constant $c$, whereas, in $\Omega^-$, $z(x)=y^-(x)+d$ for some other constant $d$. But continuity across $\Gamma_1$ requires that $d=c$ and hence on $\Gamma_2$ that $c=c+e_2$ - a contradiction.

(ii) For general results concerning the relationship between the gradients of a Lipschitz mapping on either side of an interface see Ball \& Carstensen \cite{jmbcc_inprep}, Iwaniec et al. \cite{iwaniec2002failure}. 
\end{remark}

Next we present a method for constructing non-planar interfaces at zero stress that is applicable to any set of martensitic wells $K$, provided that there exists a rank-one connection between ${\rm SO}\left(d\right)$, the 
austenitic well, and the relative interior of the quasiconvex hull of $K$, $\mathrm{rint}\,K^{qc}$. Here, the interior of $K^{qc}$ is taken relative to 
the set $\mathcal{D}:=\lbrace A\in\mathbb{R}^{d\times d}:\det A=\Delta\rbrace$, where $\Delta$ denotes the determinant of the martensitic variants.

\begin{lemma} Let $K\subset\mathcal{D}$ be a compact set such that $\mathrm{rint}\,K^{qc}\neq\emptyset$. Further, assume that there exist $\epsilon>0$ 
and nonzero vectors $a,\,n\in\mathbb{R}^{d}$, $\vert n\vert =1$, such that ${\bf 1}+a\otimes n\in\mathcal D$ and
\begin{equation}
\label{eq:conditionsofconstruction1}
B\left({\bf 1}+a\otimes n,\epsilon\right)\cap\mathcal D\subset K^{qc}.
\end{equation}
Then, for some open ball $\Omega=B(0,r)\subset\mathbb{R}^{d}$ and a non-planar $(d-1)$-surface $\Gamma$, as in 
Theorem~\ref{lemmamodifiedjump}, there exists a deformation $z\in W^{1,\infty}\left(\Omega,\mathbb{R}^{d}\right)$ such that
\begin{equation}
Dz(x)=\left\lbrace\begin{array}{rcl}
Dy(x)&,\quad &x\in\Omega^{+}\\
\mathbf{1}&,\quad &x\in\Omega^{-}
\end{array}\right.\nonumber
\end{equation}
with $y\in C^{1}\left(\overline{\Omega^{+}},\mathbb{R}^{d}\right)$ and $Dy(x)\in K^{qc}$ for all $x\in\overline{\Omega^{+}}$. That is, there exists a 
microstructure of martensite represented by $Dy$ which borders compatibly with a pure phase of austenite along the non-planar interface $\Gamma$.
\label{lemma:curvedinterface}
\end{lemma}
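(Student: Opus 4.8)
The plan is to produce the interface and the deformation $y$ completely explicitly, realising $\Gamma$ as a small \emph{bending} of the classical flat austenite--martensite interface associated with the rank-one connection $\mathbf1+a\otimes n\in\mathcal D$. Recall that the affine map $x\mapsto(\mathbf1+a\otimes n)x$ already realises a planar interface $\{x\cdot n=0\}$ between austenite and the single martensitic gradient $\mathbf1+a\otimes n$; since \eqref{eq:conditionsofconstruction1} places $\mathbf1+a\otimes n$ in $\mathrm{rint}\,K^{qc}$, there is room to perturb this picture while keeping the martensitic gradient in $K^{qc}$. To exploit this I would seek $y$ of the form $y(x)=x+h(x)\,a$, so that $Dy=\mathbf1+a\otimes\nabla h$ is automatically a rank-one perturbation of $\mathbf1$ in the pointwise direction $\nabla h$, and would take $\Gamma$ to be a level set $\{h=0\}$ of the \emph{same} scalar function $h$. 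This ansatz is convenient for two reasons: continuity of the eventual map $z$ across $\Gamma$ reduces to $h=0$ there (since then $y=\mathrm{id}$ on $\Gamma$), and because $\nabla h$ is normal to the level sets of $h$, the jump $Dy-\mathbf1=a\otimes\nabla h$ automatically points along the normal to $\Gamma$, so the jump condition \eqref{eq:jumpcondition} holds by construction.

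The decisive constraint is on the determinant, and this is where I expect the main obstacle to lie. Because $\det$ is a null Lagrangian one has $K^{qc}\subset\mathcal D$, so I must arrange $\det Dy\equiv\Delta$ \emph{exactly}, not merely approximately; using $\det(\mathbf1+a\otimes\nabla h)=1+a\cdot\nabla h$ together with $\det(\mathbf1+a\otimes n)=1+a\cdot n=\Delta$, this amounts to requiring $a\cdot\nabla h\equiv a\cdot n$ throughout $\Omega^+$. A generic bending of the interface alters $a\cdot\nabla h$ and thereby leaves $\mathcal D$, after which \eqref{eq:conditionsofconstruction1} can no longer be applied. I would resolve this by letting the perturbation act only in directions orthogonal to $a$. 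Concretely, fix a nonzero vector $p\perp a$ which is not parallel to $n$ (such $p$ exists whenever $d\ge3$, in particular in the physical case $d=3$, since $\dim a^\perp=d-1\ge2$), a nonlinear $w\in C^\infty(\mathbb R)$ with $w(0)=0$, and a small parameter $\eta>0$, and set $h(x)=x\cdot n+\eta\,w(x\cdot p)$. Then $\nabla h=n+\eta\,w'(x\cdot p)\,p$, whence $a\cdot\nabla h=a\cdot n$ identically; thus $\det Dy\equiv\Delta$ for \emph{every} $\eta$, the perturbation being invisible to the determinant precisely because it is $a$-orthogonal.

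It then remains to verify the remaining requirements and tune the parameters. Writing $Dy-(\mathbf1+a\otimes n)=\eta\,w'(x\cdot p)\,a\otimes p$, whose size is at most $\lvert\eta\rvert\,\lVert w'\rVert_\infty\,\lvert a\rvert\,\lvert p\rvert$ on any fixed compact set, I would choose the radius $r$ and then $\eta$ small enough that this stays below $\epsilon$ on $\overline{\Omega^+}$, so that $Dy(x)\in B(\mathbf1+a\otimes n,\epsilon)\cap\mathcal D\subset K^{qc}$ there, as required. Since $h(0)=0$ and $\nabla h(0)=n+\eta\,w'(0)\,p\neq0$, the implicit function theorem shows that in a sufficiently small ball $\Omega=B(0,r)$ the level set $\Gamma=\{h=0\}$ is a connected $C^1$ (indeed smooth) $(d-1)$-surface splitting $\Omega$ into $\Omega^\pm=\{\pm h>0\}$; non-planarity of $\Gamma$ follows because, with $p\not\parallel n$ and $w$ nonlinear, the unit normal $\nabla h/\lvert\nabla h\rvert$ is non-constant along $\Gamma$. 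Finally, taking $y^-=\mathrm{id}$ on $\Omega^-$ and $y^+=y$ on $\Omega^+$, one has $y^+=y^-$ on $\Gamma$ and \eqref{eq:jumpcondition} holds as above; since $\Omega$ is a ball, hence simply connected (and $\Gamma$ is connected), the sufficiency direction of Theorem~\ref{lemmamodifiedjump} yields the desired $z\in W^{1,\infty}(\Omega,\mathbb R^d)$ satisfying \eqref{eq:map_z}, completing the construction. I expect the only delicate points to be bookkeeping ones---fixing $r$ and $\eta$ so that every estimate holds on the closure $\overline{\Omega^+}$ and confirming that $\Gamma$ is a genuine graph---rather than anything structural.
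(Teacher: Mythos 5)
Your construction is correct and is essentially the paper's own proof: the same ansatz $y^{+}(x)=x+f(x)\,a$ with the interface taken as the zero level set of $f$, the same $a$-orthogonal perturbation ensuring $a\cdot\nabla f\equiv a\cdot n$ (hence $\det Dy^{+}\equiv 1+a\cdot n=\Delta$ exactly), the same smallness argument placing $Dy^{+}(x)\in B\left(\mathbf{1}+a\otimes n,\epsilon\right)\cap\mathcal{D}\subset K^{qc}$, and the same gluing via Theorem~\ref{lemmamodifiedjump}; indeed your explicit choice $h(x)=x\cdot n+\eta\,w(x\cdot p)$ with $p\perp a$ is precisely the paper's Example following Lemma~\ref{lemma:curvedinterface}, where $p=a\wedge n$. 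The only point to sharpen is that ``$w$ nonlinear'' should be strengthened to ``$w'$ non-constant in every neighbourhood of $0$'' (e.g.\ $w(t)=t^{2}$), since a $w$ that is affine near $0$ would make $\Gamma\cap B(0,r)$ planar for the small radius $r$ forced by your implicit-function-theorem step --- the paper guards against exactly this by assuming $\nabla f/\vert\nabla f\vert$ is non-constant on $\{x\in B(0,r):f(x)=0\}$ for all sufficiently small $r$.
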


\begin{proof}
Let $f\in C^{1}\left(B(0,1)\right)$ satisfy the following properties:
\begin{itemize}
\item $f\left(0\right)=0$ and $\nabla f\left(0\right) =n$;
\item $\|\nabla f-n\|_{\infty}<\epsilon/\vert a\vert$;
\item $a\cdot\nabla f(x)=a\cdot n$ for all $x\in B(0,1)$;
\item for all sufficiently small $r\in (0,1)$, $\nabla f/\vert\nabla f\vert$ is not constant on $\{x\in B(0,r): f(x)=0\}$.
\end{itemize}
Choose an orthonormal system of coordinates $(\bar x,x_d)$ with origin at $0$ and $e_d=n$. Consider the map $g:B(0,1)\rightarrow{\mathbb R}^d$ defined by $g(x)=(\bar x,f(x))$. Since $\nabla g(0)={\bf 1}\neq 0$ it follows from the inverse function theorem that for $r>0$ sufficiently small and some neighbourhood $V$ of $0\in{\mathbb R}^d$ the map $g:B(0,r)\rightarrow V$ is invertible with $C^1$ inverse $g^{-1}$, and that 
$$\{x\in B(0,r):f(x)=0\}=\{x\in B(0,r): x_d=h(\bar x)\},$$
where $h(\bar x)=g^{-1}(\bar x,0)\cdot n$. Let $\Omega=B(0,r)$, $\Omega^\pm=\{x\in \Omega:\pm f(x)>0\}$ and $\Gamma=\{x\in\Omega: f(x)=0\}$. Note that $\Gamma$ is a $\left(d-1\right)$-surface of class $C^{1}$ and that the unit normal $n(x)$ to a point $x\in\Gamma$ is given by $n(x)=\nabla f(x)/\vert\nabla f(x)\vert$. Also since on $\Gamma$, $\nabla f/\vert\nabla f\vert$ cannot be constant, $\Gamma$ defines a non-planar interface.

We now construct the appropriate deformation. Define $y^-:\Omega^-\rightarrow {\mathbb R}^d$  by $y^-(x)=x$ and $y^+:\Omega^+\rightarrow{\mathbb R}^d$ by
\begin{equation}
y^+\left(x\right)=x+af\left(x\right).
\end{equation}
As $f\in C^{1}\left(B(0,1)\right)$ and $\Omega\subset\subset B(0,1)$, it follows that $y^+\in C^{1}\left(\overline{\Omega^{+}},\mathbb{R}^{d}\right)$ and 
$$Dy^+(x)=\mathbf{1}+a\otimes\nabla f(x).$$ In particular, $Dy^+(0)=\mathbf{1}+a\otimes n\in\mathrm{rint}\,K^{qc}$ and, for all other $x\in\Gamma$, $Dy^+(x)={\bf 1}+a(x)\otimes n(x)$, where $a(x)=|\nabla f(x)|a$. Also
$$\det Dy^+(x)=1 +a\cdot \nabla f(x)=1+a\cdot n,$$
so that $Dy^+(x)\in\mathcal D$ for all $x\in \Omega^+$. Finally, by 
(\ref{eq:conditionsofconstruction1}) and our assumption that $\|\nabla f-n\|_{\infty}<\epsilon/\vert a\vert$,
\[Dy^+(x)\in B(\mathbf{1}+a\otimes n,\epsilon)\cap\mathcal{D}\subset\,K^{qc}.\]
Theorem~\ref{lemmamodifiedjump} now applies and the proof is complete.
\end{proof}

Depending on the choice of function $f$ defining the implicit surface $\Gamma$, one can obtain a variety of interfaces.

\begin{example}
Let $h\in C^{1}(\mathbb{R})$ be such that $\dot{h}$ is not constant, $h(0)=\dot{h}(0)=0$ and $$\|\dot{h}\|_{\infty} 
<\frac{\epsilon}{\vert a\vert ^{2}}.$$
Assume $a,n\in{\mathbb R}^3$ are nonparallel and define $f\in C^{1}(\mathbb{R}^{3})$ by
\begin{equation}
f(x)=x\cdot n+h(x\cdot(a\wedge n)),\nonumber
\end{equation}
where $a\wedge n$ denotes the vector product of $a$ and $n$. If $a$ and $n$ are parallel, we can proceed similarly replacing $a\wedge n$ by any vector perpendicular to them. Then, $f(0)=0$ and
\begin{equation}
\nabla f(x)=n+\dot{h}(x\cdot a\wedge n)a\wedge n,\nonumber
\end{equation}
so that $\nabla f(0)=n$. Moreover, $a\cdot\nabla f(x)=a\cdot n$.
By the orthogonality of the vectors $n$ and $a\wedge n$,  $\Gamma:=\{f^{-1}(0)\}=\left\{(-h(x_{a\wedge n}),x_{a\wedge n},x_{m})\,:\,(x_{a\wedge n},x_{m})\in\mathbb{R}^{2}\right\}$ in some appropriate 
coordinate system where $x_n$, $x_{a\wedge n}$ and $x_m$ are coordinates in the directions $n$, $a\wedge n$ and some vector $m\in\mathbb{R}^3$ perpendicular to both $n$ and 
$a\wedge n$, respectively. This defines a $C^{1}$ surface which extends indefinitely in the direction of $a\wedge n$ and $m$ and 
$\Omega\subset\mathbb{R}^{3}$ can be chosen to be any domain intersecting $\Gamma$ and not just the possibly small neighbourhood of Lemma~\ref{lemma:curvedinterface}. Also, $\Gamma$ is non-planar as otherwise $\dot{h}(x_{a\wedge n})=\dot{h}(y_{a\wedge n})$ for all $x$, $y\in\Gamma$ which is impossible since $\dot{h}$ is not constant.

Moreover, note that $\nabla f(x)$ only changes along the direction $a\wedge n$ and the vector $m$ is always 
tangential to the surface. Then, the two-dimensional cross-sections of $\Gamma$ with planes parallel to the one spanned by the vectors $n$ and 
$a\wedge n$ are the same so that the cross-section with the plane $x\cdot m=k$ is parametrized by $r(t)=(-h(t),t,k)$; see Fig.~\ref{fig:curvedmathematica} for an example.

\begin{figure}[ht]
	\centering
	\def\svgwidth{0.4\columnwidth}
	\begingroup
    \setlength{\unitlength}{\svgwidth}
  \begin{picture}(1,1.1)%
    \put(0,0){\includegraphics[width=\unitlength]{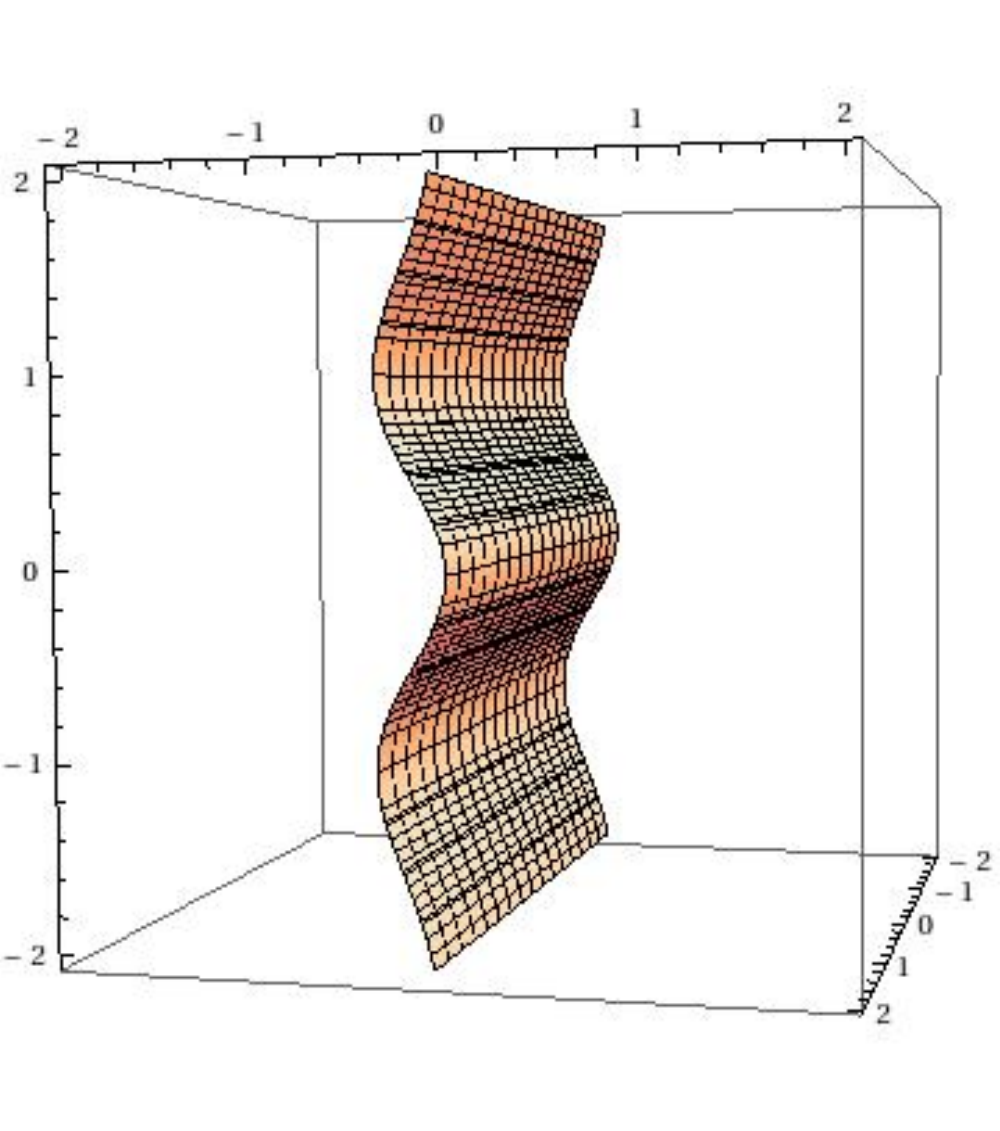}}%
    \put(0.97,0.18){\color[rgb]{0,0,0}\makebox(0,0)[lb]{\smash{$x_3$}}}%
    \put(-0.04,0.55){\color[rgb]{0,0,0}\makebox(0,0)[lb]{\smash{$x_2$}}}%
    \put(0.43,1.03){\color[rgb]{0,0,0}\makebox(0,0)[lb]{\smash{$x_1$}}}%
  \end{picture}%
\endgroup
	\caption{Example of a surface produced with $h(t)=t^{2}e^{-t^{2}}$; $x_1$, $x_2$, $x_3$ denote the coordinates in the direction of $n$, $a\wedge n$ 
and the vector perpendicular to both $n$ and $a\wedge n$ respectively. The vectors $n=(1,0,0)^T$ and $a=(0,0,1)^T$ have been chosen arbitrarily for 
simplicity and there is no smallness assumption imposed on $\Vert\dot{h}\Vert_{\infty}$ so that the curvature of the surface is clearly seen.}
 \label{fig:curvedmathematica}
\end{figure}
\end{example}

\begin{remark}
It is worth noting that if the determinant of the martensitic variants is 1, i.e.~$\Delta=1$ then, at least for martensitic transformations with cubic 
austenite, the identity matrix is an element of $K^{qc}$ (see Bhattacharya~\cite{bhattacharya1992self}) and the underlying martensitic 
microstructure can form any non-planar interface with the identity as compatibility is trivial.
\end{remark}

In proving Lemma~\ref{lemma:curvedinterface}, we assumed that there exists a relative interior point of $K^{qc}$ which is rank-one connected to 
${\rm SO}\left(d\right)$. This is by no means a trivial assumption and we now address this point.

In the context of martensitic transformations, one is interested in compact subsets of $\mathbb{R}^{d\times d}$ of the form
\begin{equation}
K=\bigcup^{N}_{i=1}{\rm SO}\left(d\right)U_{i}\nonumber
\end{equation}
where the matrices $U_{i}$ are positive definite, symmetric with $\det U_{i}=\Delta$. For $d=2$, there is a characterization of $K^{qc}$ 
(Theorem 2.2.3, in Dolzmann~\cite{dolzmannbook}) saying that $K^{qc}=K^{(2)}$, the set of laminates of order up to 2; in fact, it is easy to 
deduce from the proof that second order laminates are contained in the interior of $K^{qc}$ relative to the determinant constraint. Rank-one connections 
between second order laminates and ${\rm SO}(2)$ indeed exist and the construction of the curved interface is possible in this case. However, the case $d=3$ 
is of greater interest and, there, the situation is entirely non-trivial. For instance, in the case of two wells
\begin{equation*}
U_{1}=\mathrm{diag}\left(\eta_{1},\eta_{2},\eta_{3}\right),\quad U_{2}=\mathrm{diag}\left(\eta_{2},\eta_{1},\eta_{3}\right),
\end{equation*}
the quasiconvex hull of the set $K={\rm SO}(3)U_{1}\cup {\rm SO}(3)U_{2}$ equals $K^{(2)}$ and consists of those matrices $F$ such that
\begin{equation}
\label{eq:qchulltwowells}
F^{T}F=\left(\begin{array}{lcr}
a & c &0\\
c & b &0\\
0&0&\eta ^{2}_{3}\end{array}\right)
\end{equation}
where $ab-c^{2}=\eta^{2}_{1}\eta^{2}_{2}$ and $a+b+2\vert c\vert\leq\eta^{2}_{1}+\eta^{2}_{2}$ (see~\cite{bj92,dolzmannbook}). Note that 
due to the determinant constraint this is a two-dimensional set which implies that $K^{qc}$ is of dimension 5 (since ${\rm SO}(3)$ has 
dimension 3). On the other hand, were the relative interior of $K^{qc}$ non-empty, it would have dimension 8. Therefore, $\mathrm{rint}K^{qc}=\emptyset$ 
and our construction cannot be applied.

Hence, for $K\subset\mathbb{R}^{3\times 3}$ we follow a different approach to prove the existence of rank-one connections between ${\rm SO}(3)$ and the 
relative interior of $K^{qc}$; our argument is based on the following lemma.

\begin{lemma}[Dolzmann-Kirchheim~\cite{dolzmannkirchheim2003}]
Let $\kappa>0$, $\kappa\neq 1$ and assume that the set $\tilde{K}\subset\left\{A\in\mathbb{R}^{3\times 3}\,:\,\det\,A=1\right\}$ is compact and that 
$\tilde{K}^{qc}$ contains a three-well configuration $\tilde{K}_{ct}$ given by
\begin{eqnarray}
\tilde{K}_{ct}&=&\bigcup^{3}_{i=1}{\rm SO}\left(3\right)\tilde{U}_{i}\quad\mbox{where}\nonumber\\
\tilde{U}_{1}&=&\mathrm{diag}\left(\kappa^{2},\frac{1}{\kappa},\frac{1}{\kappa}\right),\;\tilde{U}_{2}=\mathrm{diag}\left(\frac{1}{\kappa},\kappa^{2},
\frac{1}{\kappa}\right),\;\tilde{U}_{3}=\mathrm{diag}\left(\frac{1}{\kappa},\frac{1}{\kappa},\kappa^{2}\right).\nonumber
\label{eq:tildeKct}
\end{eqnarray}
Then there exists $\epsilon=\epsilon(\kappa)>0$ such that
\[B\left(\mathbf{1},\epsilon\right)\cap\left\{A\in\mathbb{R}^{3\times 3}\,:\,\det\,A=1\right\}\subset\tilde{K}^{qc}.\]
In particular, if $\kappa<3/2$, $\epsilon(\kappa)=(\kappa-1)^2/62$ suffices.
\label{lemma:dolzmannkirchheim2003}
\end{lemma}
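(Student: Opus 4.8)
The plan is to reduce, to realise the centre $\mathbf{1}$ as a laminate, and then to prove a quantitative openness statement. \textbf{First}, since the quasiconvex hull is monotone and idempotent, the hypothesis $\tilde{K}_{ct}\subset\tilde{K}^{qc}$ yields $(\tilde{K}_{ct})^{qc}\subset(\tilde{K}^{qc})^{qc}=\tilde{K}^{qc}$, so it suffices to find $\epsilon>0$ with $B(\mathbf{1},\epsilon)\cap\{\det A=1\}\subset(\tilde{K}_{ct})^{qc}$. I would record at the outset that $\det$ is quasiaffine, hence constant along every rank-one segment; as the three wells lie in $\{\det A=1\}$, so do all laminates supported on $\tilde{K}_{ct}$ and their barycentres. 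This matches the intersection with the determinant constraint in the statement and shows that nothing can be gained transverse to $\{\det A=1\}$.

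\textbf{Second}, I would realise the centre $\mathbf{1}$ as a laminate barycentre. The $\tilde{U}_i$ are the three variants of a volume-preserving cubic-to-tetragonal transformation, and each pair ${\rm SO}(3)\tilde{U}_i,\,{\rm SO}(3)\tilde{U}_j$ is rank-one connected by the standard $180^\circ$ twin. Forming first-order twins of pairs of variants and then laminating the twins produces a gradient Young measure supported on $\tilde{K}_{ct}$ with barycentre $\mathbf{1}$; this is the self-accommodation of the tetragonal variants (Bhattacharya~\cite{bhattacharya1992self}), already invoked in the Remark above for $\Delta=1$. The rotations are essential here: the plain average $\tfrac13\sum_i\tilde{U}_i=\tfrac{\kappa^2+2/\kappa}{3}\mathbf{1}$ is a dilation by a factor strictly larger than $1$ for $\kappa\neq1$, so only the twinning rotations can return the average to $\mathbf{1}$.

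\textbf{Third}, and this is the crux, I would upgrade ``$\mathbf{1}$ is a barycentre'' to ``a relative neighbourhood of $\mathbf{1}$ consists of barycentres''. Parametrising the laminate above by its volume fractions and the angles of the twinning rotations gives a smooth map $\Phi$ from an open parameter set into $\{\det A=1\}$ with $\Phi(p_0)=\mathbf{1}$; by the implicit function theorem the rank-one connections, and hence $\Phi$, persist smoothly under small perturbations. Every value of $\Phi$ is a laminate barycentre, so it lies in $(\tilde{K}_{ct})^{qc}$, and it therefore suffices to show that $\Phi$ is a submersion at $p_0$, i.e. that $D\Phi(p_0)$ surjects onto the $8$-dimensional tangent space $\{\operatorname{tr}H=0\}$ of $\{\det A=1\}$ at $\mathbf{1}$. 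One checks that varying the volume fractions sweeps out the diagonal trace-free directions while varying the twinning rotations sweeps out the off-diagonal (shear) directions; the $S_3$ symmetry of the configuration about $\mathbf{1}$ cuts down the number of directions that must be computed by hand. The relatively open image of $\Phi$ then contains the required ball.

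\textbf{The main obstacle}, and the origin of the explicit constant, is the quantitative form of this last step. To reach $\epsilon(\kappa)=(\kappa-1)^2/62$ for $\kappa<3/2$ one must simultaneously bound how far the parameters may be displaced from $p_0$ while the volume fractions stay in $[0,1]$ and the rank-one connections survive, and bound the smallest singular value of $D\Phi(p_0)$ from below. The quadratic scaling in $\kappa-1$ is forced by the determinant constraint: the excess trace of each well, $\kappa^2+2/\kappa-3=3(\kappa-1)^2+O((\kappa-1)^3)$, must be absorbed by the rotations, so although the wells sit at distance $O(\kappa-1)$ from $\mathbf{1}$ the det-constrained room around $\mathbf{1}$ is only $O((\kappa-1)^2)$ in the limiting directions, and the denominator $62$ emerges from carrying these estimates through one explicit low-order laminate. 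I expect this explicit bookkeeping, rather than any conceptual difficulty, to be the bulk of the work, and I would arrange the construction so that both the submersion and the norm bounds can be read off a single second-order laminate.
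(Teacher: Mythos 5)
Two preliminary observations. First, the paper does not prove this lemma at all: it is imported verbatim, explicit constant included, from Dolzmann--Kirchheim \cite{dolzmannkirchheim2003} and used as a black box (what the paper actually uses downstream is precisely the bound $\epsilon(\kappa)=(\kappa-1)^2/62$, in the CuAlNi computation following Corollary~\ref{corollary:interiorpointcubicaustenite}). So there is no internal proof to compare yours against, and your proposal must stand on its own as a reconstruction of the cited result. Second, your steps one and two (reduction to $(\tilde{K}_{ct})^{qc}$ by monotonicity and idempotence of the hull, and realising $\mathbf{1}$ as a laminate barycentre by self-accommodation) are fine.

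The genuine gap is in your third step, and it is not merely deferred bookkeeping: the plan of reading the openness off ``a single second-order laminate'' is dimensionally impossible. The parametrization you describe does not exist as stated, because the ``angles of the twinning rotations'' are not free parameters. For fixed invertible $Y$, the set $\{P\in{\rm SO}(3):\mathrm{rank}(P-Y)=1\}$ has at most two elements (this is the classical rigidity of the habit-plane equation $PY'=\mathbf{1}+b\otimes m$: solvable only when the middle singular value equals $1$, and then with exactly two solutions); varying a rotation angle therefore destroys the rank-one connection, and the implicit function theorem cannot restore it since the wells here are fixed, not perturbed. Consequently a first-order laminate supported on $\tilde{K}_{ct}$ carries exactly $3+1=4$ continuous parameters (an overall rotation $R_i$ and a volume fraction $\mu_i$, the twin system being a discrete choice), and for a second-order laminate with barycentre $\theta M_1+(1-\theta)M_2$, $M_i=R_i(\tilde U_{a_i}+\mu_i c_i\otimes m_i)$, the compatibility $\mathrm{rank}(M_1-M_2)=1$ is equivalent to $\mathrm{rank}(P - X_2X_1^{-1})=1$ with $P=R_2^TR_1$ and $X_i=\tilde U_{a_i}+\mu_i c_i\otimes m_i$; by the rigidity just quoted this pins $P$ to at most two values once $(\mu_1,\mu_2)$ are fixed. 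The free continuous parameters are thus at most $(R_2,\mu_1,\mu_2,\theta)$, i.e.\ six, so the barycentres of second-order laminates sweep out a set of dimension at most six, which can never contain a relatively open subset of the eight-dimensional manifold $\{\det A=1\}$; no map $\Phi$ built on such a configuration can be a submersion onto $\{\mathrm{tr}\,H=0\}$. Any correct proof must use laminates of order at least three --- for instance by laminating elements of two different two-well hulls, which by the Ball--James characterization (cf.~(\ref{eq:qchulltwowells})) are five-dimensional and are themselves order-two objects --- and it is in carrying uniform quantitative estimates through such a higher-order construction that the constant $62$ arises. That construction, which is the entire content of the lemma as the paper uses it, is exactly what your sketch leaves out.
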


\begin{remark}
The three well configuration $\tilde{K}_{ct}$ corresponds to a cubic-to-tetragonal transformation for which $\eta^2_{1}=1/\eta_2$. Also, we note that 
the assumption $\kappa<3/2$ is realistic as for shape-memory alloys the lattice parameters are typically close to $1$. Henceforth, without loss of 
generality, we assume that $\epsilon<1$.
\end{remark}

From Lemma~\ref{lemma:dolzmannkirchheim2003} we deduce the following result providing conditions on $K$ such that rank-one connections between ${\rm SO}(3)$ and 
$\mathrm{rint}\,K^{qc}$ exist:

\begin{theorem}
Let $\kappa>0$, $\kappa\neq 1$ and $\Delta>0$ be such that
\begin{equation}
\frac{\vert \Delta^{1/3}-1\vert}{\Delta^{1/3}}\sqrt{\Delta^{4/3} + 2\Delta + \Delta^{2/3} + 2}<\epsilon(\kappa),
\label{eq:deltaepsilon}
\end{equation}
where $\epsilon(\kappa)\in (0,1)$ is such that $B\left(\mathbf{1},\epsilon\right)\cap
\left\{A\in\mathbb{R}^{3\times 3}:\det A=1\right\}\subset\tilde{K}^{qc}$ (see Lemma~\textnormal{\ref{lemma:dolzmannkirchheim2003}}). Further, assume that $K\subset\left\{A\in\mathbb{R}^{3\times 3}:
\det A=\Delta\right\}$ is compact and that $K^{qc}$ contains a three-well configuration $K_{ct}$ given by
\begin{equation}
K_{ct}=\bigcup^{3}_{i=1}{\rm SO}\left(3\right)U_{i},
\label{eq:Kct}
\end{equation}
where
\begin{equation*}
U_{1}=\mathrm{diag}\left(\eta_2,\eta_1,\eta_1\right),\;
U_{2}=\mathrm{diag}\left(\eta_1,\eta_2,\eta_1\right),\;
U_{3}=\mathrm{diag}\left(\eta_1,\eta_1,\eta_2\right)
\end{equation*}
and $\eta_1=\Delta^{1/3}/\kappa$, $\eta_2=\Delta^{1/3}\kappa^2$. Then there exist $a,\,n\in\mathbb{R}^{3}$ such that $\mathbf{1}+a\otimes n\in
\mathrm{rint}\,K^{qc}$.
\label{lemma:interiorpointexistence}
\end{theorem}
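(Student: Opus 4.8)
The plan is to reduce the statement to the unit-determinant case treated in Lemma~\ref{lemma:dolzmannkirchheim2003} by a scaling argument, and then to exhibit an explicit rank-one connection from $\mathbf{1}$ into the ball that lemma provides. The first observation I would record is that the wells $U_i$ are merely rescaled copies of the wells $\tilde U_i$: since $\eta_1=\Delta^{1/3}/\kappa$ and $\eta_2=\Delta^{1/3}\kappa^2$, one has $U_i=\Delta^{1/3}\tilde U_i$ for $i=1,2,3$, and hence $K_{ct}=\Delta^{1/3}\tilde K_{ct}$.

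Next I would set $\tilde K:=\Delta^{-1/3}K$, which is compact and contained in $\{A:\det A=1\}$. The elementary fact underlying the reduction is that the quasiconvex hull commutes with multiplication by a positive scalar, i.e.\ $\tilde K^{qc}=\Delta^{-1/3}K^{qc}$; this follows directly from the definition of $K^{qc}$ in Section~\ref{sec:model}, replacing a generating sequence $z^{j}$ by $\Delta^{-1/3}z^{j}$ and noting that convergence in measure is preserved under the homeomorphism $A\mapsto\Delta^{-1/3}A$. Consequently $\tilde K^{qc}=\Delta^{-1/3}K^{qc}\supseteq\Delta^{-1/3}K_{ct}=\tilde K_{ct}$, so the hypotheses of Lemma~\ref{lemma:dolzmannkirchheim2003} hold for $\tilde K$. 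That lemma then yields $\epsilon(\kappa)\in(0,1)$ with $B(\mathbf{1},\epsilon)\cap\{\det A=1\}\subset\tilde K^{qc}$, and rescaling by $\Delta^{1/3}$ (using that $\Delta^{1/3}\{\det A=1\}=\mathcal D$) gives
\[
B\bigl(\Delta^{1/3}\mathbf{1},\,\Delta^{1/3}\epsilon\bigr)\cap\mathcal D\subset K^{qc}.
\]

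It then remains to find a rank-one connection from the identity landing in this set. I would take any unit vector $n$ and set $a=(\Delta-1)n$, so that $F:=\mathbf{1}+a\otimes n=\mathbf{1}+(\Delta-1)\,n\otimes n$. Then $\det F=1+a\cdot n=\Delta$, so $F\in\mathcal D$, and a direct computation in an orthonormal basis whose first vector is $n$ (in which $F=\mathrm{diag}(\Delta,1,1)$) gives
\[
|F-\Delta^{1/3}\mathbf{1}|^2=(\Delta-\Delta^{1/3})^2+2(1-\Delta^{1/3})^2=(\Delta^{1/3}-1)^2\bigl(\Delta^{4/3}+2\Delta+\Delta^{2/3}+2\bigr).
\]
Dividing by $\Delta^{1/3}$, assumption~\eqref{eq:deltaepsilon} says precisely that $|F-\Delta^{1/3}\mathbf{1}|<\Delta^{1/3}\epsilon$, whence $F\in B(\Delta^{1/3}\mathbf{1},\Delta^{1/3}\epsilon)\cap\mathcal D\subset K^{qc}$. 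Since this ball-slice is relatively open in $\mathcal D$ and contained in $K^{qc}$, the point $F$ lies in $\mathrm{rint}\,K^{qc}$, and $F=\mathbf{1}+a\otimes n$ is the desired rank-one connection.

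Each step is short, so the substantive content lies in two places. The first is the scaling identity for the quasiconvex hull, which is what lets Lemma~\ref{lemma:dolzmannkirchheim2003} apply verbatim after rescaling; this is the only step where one must be a little careful (verifying both inclusions via the inverse scaling). The second, and the conceptual crux, is recognising that the left-hand side of~\eqref{eq:deltaepsilon} is exactly $\Delta^{-1/3}$ times the distance from $\mathrm{diag}(\Delta,1,1)$ to the centre $\Delta^{1/3}\mathbf{1}$ of the rescaled ball — this is what makes the stated hypothesis the natural sharp condition for the construction to go through. I do not anticipate a genuine obstacle beyond confirming that membership in the relatively open slice $B(\Delta^{1/3}\mathbf{1},\Delta^{1/3}\epsilon)\cap\mathcal D$ indeed places $F$ in the relative interior of $K^{qc}$, rather than merely in $K^{qc}$.
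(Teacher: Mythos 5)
Your proposal is correct and follows essentially the same route as the paper: the scaling identity $\tilde K^{qc}=\Delta^{-1/3}K^{qc}$ you verify via rescaled generating sequences is precisely the content of Lemma~\ref{lemma:deltaK} (proved there by the same argument), and the rank-one connection $a=(\Delta-1)n$ with the norm computation $\vert F-\Delta^{1/3}\mathbf{1}\vert^2=(\Delta^{1/3}-1)^2(\Delta^{4/3}+2\Delta+\Delta^{2/3}+2)$ is exactly the paper's choice and estimate, merely computed in a diagonalizing basis rather than by expanding the Frobenius norm. Your closing remark that membership in the relatively open ball-slice places $F$ in $\mathrm{rint}\,K^{qc}$ correctly fills in the one step the paper leaves implicit.
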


In proving the above theorem, we use two very simple observations which we now prove in the form of a lemma.
\begin{lemma}
In the notation of Lemma~\textnormal{\ref{lemma:dolzmannkirchheim2003}} and Theorem~\textnormal{\ref{lemma:interiorpointexistence}} the following hold:
\begin{itemize}
\item[(i)] $K^{qc}_{ct}=\Delta^{1/3}\tilde{K}^{qc}_{ct}$;
\item[(ii)] $\mathrm{rint}\,K^{qc}_{ct}=\Delta^{1/3}\mathrm{rint}\,\tilde{K}^{qc}_{ct}$; in particular, writing $F=\Delta^{1/3}\tilde{F}$,
\begin{equation*}
B(\tilde{F},\epsilon)\cap\left\{A\in\mathbb{R}^{3\times 3}:\det A=1\right\}\subset\tilde{K}^{qc}_{ct}
\end{equation*}
if and only if
\begin{equation*}
B(F,\Delta^{1/3}\epsilon)\cap\left\{A\in\mathbb{R}^{3\times 3}:\det A=\Delta\right\}\subset K^{qc}_{ct}.
\end{equation*}
\end{itemize}
\label{lemma:deltaK}
\end{lemma}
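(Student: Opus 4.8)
The plan is to reduce both statements to the single structural fact that the diagonal scaling map $\Phi(A)=\Delta^{1/3}A$ relates the two configurations exactly. First I would record the elementary identity $U_i=\Delta^{1/3}\tilde U_i$ for $i=1,2,3$: since $\eta_1=\Delta^{1/3}/\kappa$ and $\eta_2=\Delta^{1/3}\kappa^2$, one has $\Delta^{1/3}\tilde U_1=\Delta^{1/3}\mathrm{diag}(\kappa^2,1/\kappa,1/\kappa)=\mathrm{diag}(\eta_2,\eta_1,\eta_1)=U_1$, and similarly for $U_2,U_3$. Because scalar multiplication commutes with left multiplication by ${\rm SO}(3)$, this upgrades to $K_{ct}=\Delta^{1/3}\tilde K_{ct}$.

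For (i), I would prove the general homogeneity property $(cL)^{qc}=c\,L^{qc}$ for any compact $L\subset\mathbb R^{d\times d}$ and any $c>0$, and then apply it with $c=\Delta^{1/3}$ and $L=\tilde K_{ct}$. Using the generating-sequence definition of the quasiconvex hull from Section~\ref{sec:model}: if $F\in L^{qc}$ is witnessed by $z^j$ with $z^j\stackrel{\ast}{\rightharpoonup}Fx$ and $Dz^j\to L$ in measure, then $w^j:=cz^j$ satisfies $w^j\stackrel{\ast}{\rightharpoonup}(cF)x$ and $Dw^j=c\,Dz^j$. Since $M\mapsto cM$ is a homeomorphism of $\mathbb R^{d\times d}$, convergence in measure of $Dz^j$ to $L$ is equivalent to that of $Dw^j$ to $cL$; hence $cF\in(cL)^{qc}$, giving $c\,L^{qc}\subseteq(cL)^{qc}$. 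Running the same argument with $c^{-1}$ in place of $c$ and $cL$ in place of $L$ yields the reverse inclusion, so $(cL)^{qc}=c\,L^{qc}$ and therefore $K_{ct}^{qc}=\Delta^{1/3}\tilde K_{ct}^{qc}$.

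For (ii), the key point is that $\Phi(A)=\Delta^{1/3}A$ is a homeomorphism of $\mathbb R^{3\times 3}$ that carries the manifold $\{\det A=1\}$ onto $\{\det A=\Delta\}$ (as $\det(\Delta^{1/3}A)=\Delta$ whenever $\det A=1$). Relative interiors --- taken with respect to these two determinant manifolds --- are preserved by homeomorphisms between the manifolds, so $\mathrm{rint}\,K_{ct}^{qc}=\mathrm{rint}\,\Phi(\tilde K_{ct}^{qc})=\Phi(\mathrm{rint}\,\tilde K_{ct}^{qc})=\Delta^{1/3}\mathrm{rint}\,\tilde K_{ct}^{qc}$, using part (i) in the first equality. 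The ball equivalence then follows by applying $\Phi$ (and its inverse) to the stated inclusion: since $\Phi$ scales distances by $\Delta^{1/3}$ one has $\Phi(B(\tilde F,\epsilon))=B(\Delta^{1/3}\tilde F,\Delta^{1/3}\epsilon)=B(F,\Delta^{1/3}\epsilon)$, and $\Phi$ being a bijection commutes with intersection, so $B(\tilde F,\epsilon)\cap\{\det A=1\}\subset\tilde K_{ct}^{qc}$ maps exactly to $B(F,\Delta^{1/3}\epsilon)\cap\{\det A=\Delta\}\subset K_{ct}^{qc}$, the converse being obtained from $\Phi^{-1}$.

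I expect no serious obstacle here --- the paper itself flags these as very simple observations. The only points needing a little care are verifying that convergence in measure genuinely transfers under the scaling homeomorphism, so that the generating-sequence characterization scales cleanly, and confirming that the relative interior is computed with respect to the correct determinant manifold on each side, so that $\Phi$ is a homeomorphism between exactly those two manifolds and not merely of the ambient space.
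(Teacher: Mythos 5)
Your proposal is correct and takes essentially the same approach as the paper: part (i) via the generating-sequence definition of the quasiconvex hull, and part (ii) via the fact that $A\mapsto\Delta^{1/3}A$ is a similarity carrying $\left\{\det A=1\right\}$ onto $\left\{\det A=\Delta\right\}$, so balls and relative inclusions transfer exactly. The only (immaterial) difference is in (i): the paper scales the independent variable, setting $y^j(x)=\tilde{y}^j(\Delta^{1/3}x)$ so that the chain rule produces the factor $\Delta^{1/3}$ on the gradients, whereas you multiply the deformations themselves by the constant, which achieves the same gradient scaling without changing the domain.
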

\begin{proof} (i) For notational convenience let $K=K_{ct}=\bigcup^{3}_{i=1}{\rm SO}\left(3\right)U_{i}$ and similarly, 
$\tilde{K}=\tilde{K}_{ct}=\bigcup^{3}_{i=1}{\rm SO}\left(3\right)\tilde{U}_{i}$. Note that $\Delta^{1/3}\tilde{K}=K$. To show that 
$\Delta^{1/3}\tilde{K}^{qc}\subset\,K^{qc}$, let $\tilde{F}\in\tilde{K}^{qc}$. By the characterization of the elements of $\tilde K^{qc}$ as weak$\ast$ 
limits (see Section~\ref{sec:model}), there exists a sequence $\tilde{y}^j$ uniformly bounded in $W^{1,\infty}(\Omega,\mathbb{R}^3)$, for some bounded 
domain $\Omega\subset\mathbb{R}^3$, such that
$$\mathrm{dist}(D\tilde{y}^j,\tilde{K})\rightarrow0\:\:\:\mbox{in measure},$$
$$D\tilde{y}^j\stackrel{\ast}{\rightharpoonup}\tilde{F}\:\:\:\mbox{in $L^{\infty}(\Omega,\mathbb{R}^{3\times 3})$}.$$
Define $y^{j}(x)=\tilde{y}^j(\Delta^{1/3} x)$; by our assumptions on $\tilde{y}^j$, $y^j$ is uniformly bounded in 
$W^{1,\infty}(\Delta^{-1/3}\Omega,\mathbb{R}^3)$ and the gradients $Dy^j(x)=\Delta^{1/3} D\tilde{y}^j(\Delta^{1/3} x)$ satisfy
$$\mathrm{dist}(Dy^j,\Delta^{1/3}\tilde{K})\rightarrow0\:\:\:\mbox{in measure},$$
$$Dy^j\stackrel{\ast}{\rightharpoonup}\Delta^{1/3}\tilde{F}\:\:\:\mbox{in $L^{\infty}(\Delta^{-1/3}\Omega,\mathbb{R}^{3\times 3})$}.$$
But $\Delta^{1/3}\tilde{K}=K$ implying that $\Delta^{1/3}\tilde{F}\in K^{qc}$.
That $K^{qc}\subset \Delta^{1/3}\tilde K^{qc}$ is proved similarly.

(ii) Let $\tilde{F}\in\mathrm{rint}\tilde{K}^{qc}$; then there exists $\epsilon >0$ such that
\[B(\tilde{F},\epsilon)\cap\lbrace\,A\in\mathbb{R}^{3\times 3}:\det\,A=1\rbrace\subset\tilde{K}^{qc}.\]
Let $F=\Delta^{1/3}\tilde{F}$; then $F\in\,K^{qc}$ by part (i). We wish to conclude that $F\in\mathrm{rint}\,K^{qc}$ and, in particular, that 
$B(F,\Delta^{1/3}\epsilon)\cap\lbrace\,A:\det\,A=\Delta\rbrace\subset\,K^{qc}$. It is easy to see that for any 
$G\in\,B(F,\Delta^{1/3}\epsilon)\cap\lbrace\,A:\det\,A=\Delta\rbrace$, $\vert\Delta^{-1/3}G-\tilde{F}\vert<\epsilon$ and $\det\Delta^{-1/3}G=1$. 
Therefore,
\[\Delta^{-1/3}G\in B(\tilde{F},\epsilon)\cap\lbrace\,A\in\mathbb{R}^{3\times 3}:\det A=1\rbrace\subset\tilde{K}^{qc}\]
so that $\Delta^{-1/3}G\in\tilde{K}^{qc}=\Delta^{-1/3}K^{qc}$ from part (i) and $G\in\,K^{qc}$. The reverse implication is proved similarly.
\end{proof}

\begin{proof}[Proof of Theorem~\ref{lemma:interiorpointexistence}]
We prove the result for the case $K=K_{ct}$ and the general statement then follows. 
From Lemma~\ref{lemma:dolzmannkirchheim2003} and Lemma~\ref{lemma:deltaK}, we know that the relative interior of $K^{qc}$ is non-empty and, in 
particular, $\Delta^{1/3}\mathbf{1}\in\mathrm{rint}\,K^{qc}$. The idea behind the proof is then rather natural: if we choose $\Delta>0$ close enough to 
$1$, we can surely find a rank-one direction, say $a\otimes n$, such that the line
\[\mathbf{1}+ta\otimes n,\,\,t\in\mathbb{R}\]
intersects the relative neighbourhood of $\Delta^{1/3}\mathbf{1}$ lying in $K^{qc}$, that is the set $$B\left(\Delta^{1/3}\mathbf{1},\Delta^{1/3}\epsilon\right)\cap\left\{A\in\mathbb{R}^{3\times 3}:\det A=\Delta\right\}.$$ Then, the point of intersection, say $F$, will itself be in 
$\mathrm{rint}\,K^{qc}$. Choose any vector $n\in\mathbb{R}^{3}$, $\vert n\vert=1$ and let $a=(\Delta - 1)n$. We claim that $F=\mathbf{1}+a\otimes n$ is the desired point. Trivially 
\[
\det F=1+a\cdot n = \Delta
\]
and it remains to show that
\begin{equation}
\vert F-\Delta^{1/3}\mathbf{1}\vert <\Delta^{1/3}\epsilon.\nonumber
\end{equation}
But, with $a=(\Delta-1)n$, we obtain that
\begin{eqnarray}
\vert F-\Delta^{1/3}\mathbf{1}\vert ^{2}&=&\vert(1-\Delta^{1/3})\mathbf{1}+a\otimes n\vert ^{2}\nonumber\\
&=&3(1-\Delta^{1/3})^{2}+2(1-\Delta^{1/3})a\cdot n+\vert a\vert ^{2}\nonumber\\
&=&(1-\Delta^{1/3})^{2}(\Delta^{4/3}+2\Delta + \Delta^{2/3}+2)\nonumber\\
&<&\Delta^{2/3}\epsilon^{2},\nonumber
\end{eqnarray}
where the last inequality follows from (\ref{eq:deltaepsilon}). This completes the proof.
\end{proof}

Combining the above result with our construction of the non-planar interface in Lemma~\ref{lemma:curvedinterface} we deduce that under the hypotheses of 
Theorem~\ref{lemma:interiorpointexistence}, we can construct a stress-free curved austenite-martensite interface for a set of martensitic wells 
containing the three well configuration $K_{ct}$. As remarked already, the configuration $K_{ct}$ in (\ref{eq:Kct}) corresponds to a cubic-to-tetragonal 
transition; nevertheless, such interfaces have not so far been observed in materials with such high symmetry in the martensitic phase. Thus, proving the 
existence of this relative interior point for transformations with lower martensitic symmetry is desirable. Indeed, through 
Theorem~\ref{lemma:interiorpointexistence}, we can prove the existence of rank-one connections between ${\rm SO}(3)$ and $\mathrm{rint}\,K^{qc}$ for any 
transformation with cubic austenite (with special lattice parameters) through the machinery used by Bhattacharya in~\cite{bhattacharya1992self}; in 
particular, this includes the cubic-to-orthorhombic transition undergone by Seiner's CuAlNi specimen

To be more specific, let $\mathcal{P}^{24}\subset {\rm SO}(3)$ denote the symmetry group of the cube; namely, writing $e_1$, $e_2$, $e_3$ for the standard 
basis vectors in $\mathbb{R}^{3}$ and $R[\theta, e]$ for the rotation by angle $\theta$ about the vector $e\in\mathbb{R}^3$, $\mathcal{P}^{24}$ consists 
of the following rotations:\vspace{0.2cm}

\begin{centering}
$\mathbf{1}$,\\
\quad\\
$R[\pm90^{\circ},e_1],R[\pm90^{\circ},e_2],R[\pm90^{\circ},e_3],$\\
\quad\\
$R[\pm120^{\circ},e_1+e_2+e_3], R[\pm120^{\circ},-e_1+e_2+e_3],$\\
\quad\\
$R[\pm120^{\circ},e_1-e_2+e_3],R[\pm120^{\circ},e_1+e_2-e_3],$\\
\quad\\
$R[180^{\circ},e_1],R[180^{\circ},e_2],R[180^{\circ},e_3],$\\
\quad\\
$R[180^{\circ},e_1\pm e_2],R[180^{\circ},e_2\pm e_3],R[180^{\circ},e_3\pm e_1]$.\\
\end{centering}\vspace{0.1cm}

\begin{lemma}
Let $U\in\mathbb{R}^{3\times3}$ be positive definite, symmetric with $\det U=\Delta$ and let
\[K:=\bigcup_{R\in\mathcal{P}^{24}}{\rm SO}(3)R^TUR.\]
Then, the set $K^{qc}$ contains the three-well configuration $K_{ct}$ given by
\begin{equation}
K_{ct}=\bigcup^{3}_{i=1}{\rm SO}\left(3\right)V_{i}
\label{eq:Kctmnx}
\end{equation}
with
\begin{equation*}
V_{1}=\mathrm{diag}\left(\mu,\sqrt{\nu\xi},\sqrt{\nu\xi}\right),\;V_{2}=\mathrm{diag}\left(\sqrt{\nu\xi},\mu,\sqrt{\nu\xi}\right),
\;V_{3}=\mathrm{diag}\left(\sqrt{\nu\xi},\sqrt{\nu\xi},\mu\right),\nonumber
\end{equation*}
and $\mu$, $\nu$, $\xi$ taking distinct values in the set 
\[\left\{\frac{\Delta}{\sqrt{(\mathrm{cof}\,U^2)_{jj}}},\sqrt{\frac{(\mathrm{cof}\,U^2)_{jj}}{(U^2)_{kk}}},\sqrt{(U^2)_{kk}}\right\},\]
where $j,k=1,2,3$, $j\neq k$, and for $A\in\mathbb{R}^{3\times3}$, $(\mathrm{cof}\,A)_{jj}$ denotes the $(jj)$-component of the cofactor matrix of $A$.
\label{lemma:configcubicaustenite}
\end{lemma}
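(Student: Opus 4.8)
The plan is to realize each effective well $V_i$ (up to rotation) as the polar stretch of the average deformation gradient of a sequential laminate built entirely out of the symmetry-related variants $R^{T}UR$, $R\in\mathcal{P}^{24}$, so that this average automatically lies in $K^{qc}$. This is the self-accommodation machinery of Bhattacharya~\cite{bhattacharya1992self}, adapted to a general transformation stretch $U$; the cubic symmetry plays two roles here, guaranteeing the rank-one compatibility of the variants being laminated and forcing the resulting average to have a \emph{tetragonal} polar part, i.e.\ two equal principal stretches.

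First I would fix a target axis, say $e_k$, and select the two variants $U$ and $\hat U=P^{T}UP$, where $P\in\mathcal{P}^{24}$ is the $180^\circ$ rotation about the face diagonal in the plane orthogonal to $e_k$ (interchanging the two in-plane axes and reversing $e_k$). Variants related by a two-fold rotation are twin-related by Mallard's law, in the form of the crystallographic twinning theory of \cite{bj92}, so the twinning equation $R\hat U-U=a\otimes n$ is solvable; taking $F_1=U$ as representative, the $1/2$-laminate has average $\bar F_1=U+\tfrac12\,a\otimes n\in K^{(2)}\subset K^{qc}$, and since $\det$ is a null Lagrangian which is affine along the rank-one segment with common endpoint value $\Delta$, one has $\det\bar F_1=\Delta$. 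A direct computation shows that the polar stretch of $\bar F_1$ is not yet tetragonal, so the second step is to laminate $\bar F_1$ against its mirror image $\bar F_1'$ (the two in-plane principal stretches interchanged), after checking that these two first-order averages are themselves rank-one connected. The symmetry of this second-order construction forces the in-plane block of the polar stretch $V$ to be a scalar multiple of the identity (invariance under the in-plane reflection symmetry), so that $V=\mathrm{diag}(\ldots)$ has a repeated in-plane value equal to the geometric mean of the two in-plane stretches of $\bar F_1$; this isotropisation is exactly what underlies self-accommodation and is where the square roots $\sqrt{\nu\xi}$ originate.

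It then remains to compute $V$ explicitly for a general, not necessarily diagonal, $U$. Writing the repeated in-plane value as $\sqrt{\nu\xi}$ and the axial value as $\mu$, the constraint $\det V=\mu\,\nu\xi=\Delta$ pins one relation, while the in-plane area stretch and the axial stretch produced by the averaging give $\nu\xi$ and $\mu$ in terms of the entries $(U^2)_{kk}$, $(\mathrm{cof}\,U^2)_{jj}$ and $\Delta$; matching these should yield precisely the three values in the statement, whose product is indeed $\Delta$, consistent with $\det V_i=\Delta$. Finally, by frame indifference $K^{qc}$ is invariant under left multiplication by ${\rm SO}(3)$, so from $\bar F=R_0 V\in K^{qc}$ one gets ${\rm SO}(3)V\subset K^{qc}$; carrying out the construction for each of the three axes $e_1,e_2,e_3$, with the cube symmetry making the algebra identical up to a permutation of indices, produces $V_1,V_2,V_3$ and hence $K_{ct}\subset K^{qc}$, the values $\mu,\nu,\xi$ being distinct precisely when the three listed numbers are.

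The main obstacle is the second lamination: verifying that the two first-order twin averages $\bar F_1,\bar F_1'$ are genuinely rank-one connected, so that the order-two laminate really sits in $K^{qc}$, and then pushing through the algebra, for a general non-diagonal $U$, that the symmetrised average has a polar stretch with exactly two equal entries taking the stated values. The twinning rotations are essential throughout — they are what keep $\det=\Delta$ while turning the arithmetic averages of the gradients into the geometric-mean (area-stretch) structure of the $V_i$ — and this is the delicate point that the cubic-symmetry input of Bhattacharya's construction is designed to control.
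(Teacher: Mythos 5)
Your starting point (Mallard's law plus successive midpoint laminations, following Bhattacharya) is the right machinery, and your first step matches the paper's in spirit. But the step where your square roots $\sqrt{\nu\xi}$ are supposed to come from contains a genuine gap: it is not true that laminating $\bar F_1$ against its mirror image $\bar F_1'$ produces an average whose polar stretch has an isotropic in-plane block. Invariance under a single in-plane reflection only aligns the eigenvectors of the stretch with the adapted frame; it does not equalize the two in-plane eigenvalues. Concretely, if $W=\mathrm{diag}(s_1,s_2,s_3)$ and $W'=\mathrm{diag}(s_2,s_1,s_3)$, the twin systems have normals $(e_1\pm e_2)/\sqrt 2$, and the $\lambda=1/2$ laminate $\bar F$ has $\bar F^T\bar F$ with in-plane eigenvalues $2s_1^2s_2^2/(s_1^2+s_2^2)$ and $(s_1^2+s_2^2)/2$ --- a harmonic and an arithmetic mean of $s_1^2,s_2^2$, never equal unless $s_1=s_2$, and in particular not the geometric mean. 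Worse, no choice of volume fraction helps: every simple laminate of the pair ${\rm SO}(3)W\cup{\rm SO}(3)W'$ saturates the Ball--James bound $a+b+2\vert c\vert=s_1^2+s_2^2$, whereas the self-accommodated matrix $\mathrm{diag}(\sqrt{s_1s_2},\sqrt{s_1s_2},s_3)$ has $a+b+2\vert c\vert=2s_1s_2<s_1^2+s_2^2$, so it lies strictly inside the two-well hull and is not a first-order laminate of the pair at all. This is precisely why the paper does not argue by symmetry at this point, but instead invokes the known characterization of the two-well quasiconvex hull (the comment at (\ref{eq:qchulltwowells}), from \cite{bj92}): once $\mathrm{diag}(\alpha,\beta,\gamma)$ and $\mathrm{diag}(\alpha,\gamma,\beta)$ are both known to lie in $K^{qc}$, their two-well hull --- which contains $\mathrm{diag}(\alpha,\sqrt{\beta\gamma},\sqrt{\beta\gamma})$, but only as a second-order laminate of that pair --- is contained in $K^{qc}$. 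Your argument needs this ingredient (or an explicit rank-two laminate of the mirror pair); one further lamination plus a symmetry appeal cannot deliver it.

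There is also a secondary gap: for a general, non-diagonal $U$, a single Mallard lamination pins only one eigenvector of the stretch (the two-fold axis), so $\bar F_1$ does not yet have well-defined ``in-plane principal stretches'' aligned with your chosen axis, and its mirror image is not canonically defined. The paper handles this by two successive midpoint laminations, first with $R[180^{\circ},e_1]$ and then with $R[180^{\circ},e_2]$, using at each stage the affine dependence on $\lambda$ of the off-diagonal entries of $C_\lambda=F_\lambda^TF_\lambda$ together with the determinant constraint, to produce a genuinely diagonal element $\mathrm{diag}(\alpha,\beta,\gamma)\in K^{qc}$ with $\alpha=\Delta/\sqrt{(\mathrm{cof}\,U^2)_{11}}$, $\beta=\sqrt{(\mathrm{cof}\,U^2)_{11}/(U^2)_{33}}$, $\gamma=\sqrt{(U^2)_{33}}$; only then do the cubic symmetry operations (to permute the diagonal entries) and the two-well step enter. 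So the skeleton of your construction is repairable, but both the diagonalization and, more importantly, the origin of the geometric means require these extra ingredients.
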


\begin{proof}
For much of the proof, we follow Bhattacharya~\cite{bhattacharya1992self}; nevertheless, to retain completeness, we repeat all necessary arguments. 
Let $R=R[180^{\circ},e_1]=-{\bf 1}+2e_1\otimes e_1\in\mathcal{P}^{24}$; by Mallard's law (see Proposition 2.2 in~\cite{bhattacharya1992self}) there exist $Q\in {\rm SO}(3)$, 
$a\in\mathbb{R}^3$ such that $QUR-U=a\otimes e_1$. In particular, for all $\lambda\in[0,1]$,
\[F_{\lambda}=\lambda QUR + (1-\lambda)U = U + \lambda a\otimes e_1\in K^{qc}.\]
Set $C_{\lambda}=F^{T}_{\lambda}F_{\lambda}$; then,
$$C_{\lambda}=U^2+\lambda[Ua\otimes e_1+e_1\otimes Ua]+\lambda^2\vert a\vert^2e_1\otimes e_1,$$
$$C_0=\left(\begin{array}{rrr}
(C_0)_{11}&(C_0)_{12}&(C_0)_{13}\\
(C_0)_{12}&(C_0)_{22}&(C_0)_{23}\\
(C_0)_{13}&(C_0)_{23}&(C_0)_{33}             
\end{array}\right),\quad C_1=RC_0R=\left(\begin{array}{rrr}
(C_0)_{11}&-(C_0)_{12}&-(C_0)_{13}\\
-(C_0)_{12}&(C_0)_{22}&(C_0)_{23}\\
-(C_0)_{13}&(C_0)_{23}&(C_0)_{33}             
\end{array}\right),$$
with $C_0=U^2$. Note that:
\begin{itemize}
 \item[(1)] if $ij\neq11$ then $(C_\lambda)_{ij}=C_\lambda e_j\cdot e_i=(C_0)_{ij}+\lambda a(i,j)$ is affine in $\lambda$;
 \item[(2)] if $ij=12,21,31,13$ then $(C_1)_{ij}=-(C_0)_{ij}$;
 \item[(3)] if $i\neq1,\,j\neq1$ then $(C_1)_{ij}=(C_0)_{ij}$.
\end{itemize}
By (1) and (2), evaluating for $\lambda=1$, we infer that $a(i,j)=-2(C_0)_{ij}$ for $ij=12$, $21$, $31$, $13$ and hence $(C_{1/2})_{ij}=0$; similarly, by (1) and 
(3), we find that $(C_{1/2})_{ij}=(C_0)_{ij}$ for $i\neq1$, $j\neq1$. This implies that
$$C_{1/2}=\left(\begin{array}{ccc}
(C_{1/2})_{11}&0&0\\
0&(C_0)_{22}&(C_0)_{23}\\
0&(C_0)_{23}&(C_0)_{33}           
\end{array}\right).$$
But $F_{1/2}\in K^{qc}$ implies $\det F_{1/2}=\Delta$ and thus
\begin{equation}
 \label{eq:C1/211}
(C_{1/2})_{11}=\frac{\Delta^2}{(\mathrm{cof}\,U^2)_{11}}.
\end{equation}
Now let $R=R[180^{\circ},e_2]\in\mathcal{P}^{24}$. We can find $Q\in {\rm SO}(3)$, $a\in\mathbb{R}^3$ such that $QF_{1/2}R-F_{1/2}=a\otimes e_2$ and, for all 
$\lambda\in[0,1]$,
\[G_{\lambda}=\lambda QF_{1/2}R + (1-\lambda)F_{1/2} = F_{1/2} + \lambda a\otimes e_2\in K^{qc}.\]
Setting $D_{\lambda}=G^{T}_{\lambda}G_{\lambda}$ and repeating the above process, we deduce that
$$D_{1/2}=\left(\begin{array}{ccc}
\frac{\Delta^2}{(\mathrm{cof}\,U^2)_{11}}&0&0\\
0&\frac{(\mathrm{cof}\,U^2)_{11}}{(U^2)_{33}}&0\\
0&0&(U^2)_{33}           
\end{array}\right),$$
where, we have used (\ref{eq:C1/211}) and the determinant constraint to calculate $(D_{1/2})_{22}$. It follows that $\sqrt{D_{1/2}}\in K^{qc}$. However, 
for any $Q\in {\rm SO}(3)$, $R\in\mathcal{P}^{24}$, $Q\sqrt{D_{1/2}}R\in K^{qc}$ (see \cite{bhattacharya1992self}) and using $Q=R$ with 
$R=R[180^{\circ}, e_2+e_3]$, $R=R[180^{\circ}, e_1+e_3]$ and $R=R[180^{\circ}, e_1+e_2]$, we find that
\[\bigcup^{6}_{i=1}{\rm SO}(3)\tilde{V}_{i}\subset K^{qc},\,\,\mbox{where}\]
$$\tilde{V}_{1}=\sqrt{D_{1/2}}=\left(\begin{array}{ccc}
\frac{\Delta}{\sqrt{(\mathrm{cof}\,U^2)_{11}}}&0&0\\
0&\sqrt{\frac{(\mathrm{cof}\,U^2)_{11}}{(U^2)_{33}}}&0\\
0&0&\sqrt{(U^2)_{33}}          
\end{array}\right)=:\mathrm{diag}(\alpha,\beta,\gamma)$$
and $\tilde{V}_i$, $i=2,\ldots,6$ are given by permuting the components of $\tilde{V}_1$ on the diagonal. Next consider, for example, the matrices  
$\tilde{V}_1=\mathrm{diag}(\alpha,\beta,\gamma)$, $\tilde{V}_2=\mathrm{diag}(\alpha,\gamma,\beta)$;
Then ${\rm SO}(3)\tilde{V}_1\cup {\rm SO}(3)\tilde{V}_2\subset K^{qc}$ and by the two-well problem - see the comment at (\ref{eq:qchulltwowells}) - we infer that
\[\mathrm{diag}(\alpha,\sqrt{\beta\gamma},\sqrt{\beta\gamma})\in K^{qc}.\]
In particular, using the $180^{\circ}$ rotations about the diagonals as above, we see that the three-well configuration
\[{\rm SO}(3)\mathrm{diag}(\alpha,\sqrt{\beta\gamma},\sqrt{\beta\gamma})\cup {\rm SO}(3)\mathrm{diag}(\sqrt{\beta\gamma},\alpha,\sqrt{\beta\gamma})\cup {\rm SO}(3)
\mathrm{diag}(\sqrt{\beta\gamma},\sqrt{\beta\gamma},\alpha)\]
belongs to $K^{qc}$. It is easy to see that, by considering the other possible pairs of $\tilde{V}_i$, we can interchange the roles of $\alpha$, $\beta$ 
and $\gamma$, to get another two three-well configurations belonging to $K^{qc}$. We have now obtained our result with $j=1$, $k=2$ in the expressions 
for $\mu$, $\nu$, $\xi$. This is because we reached the diagonal element $\tilde{V}_1\in K^{qc}$ by first applying $R=R[180^{\circ},e_1]$ and then 
$R=R[180^{\circ},e_2]$. Alternatively, one can do the diagonalization by first applying $R=R[180^{\circ},e_1]$ and then $R=R[180^{\circ},e_3]$ or any 
of the other four possibilities and the result follows.
\end{proof}

Theorem~\ref{lemma:interiorpointexistence} now allows us to deduce the existence of rank-one connections between ${\rm SO}(3)$ and $\mathrm{rint}\,K^{qc}$ for 
any transition with cubic austenite as in Lemma~\ref{lemma:configcubicaustenite}:

\begin{corollary}
Let $U\in\mathbb{R}^{3\times3}$ be positive definite, symmetric with $\det U=\Delta$ and satisfy
\begin{equation}
\label{eq:corollary}
\frac{\vert\Delta^{1/3}-1\vert}{\Delta^{1/3}}\sqrt{\Delta^{4/3} + 2\Delta + \Delta^{2/3} + 2}<\epsilon(\kappa),
\end{equation}
for some $\kappa\in\mathcal S(U)$, $\kappa>0$, $\kappa\neq1$, where
\begin{equation*}
\mathcal S(U):=\bigcup_{j\neq k}\left\{\frac{\Delta^{1/3}}{(\mathrm{cof}\,U^2)^{1/4}_{jj}},\frac{(\mathrm{cof}\,U^2)^{1/4}_{jj}}{(U^2)^{1/4}_{kk}\Delta^{1/6}},
\frac{(U^2)^{1/4}_{kk}}{\Delta^{1/6}}\right\}
\end{equation*}
and $0<\epsilon(\kappa)<1$ is as in Lemma~\textnormal{\ref{lemma:dolzmannkirchheim2003}}. Let
\[K:=\bigcup_{R\in\mathcal{P}^{24}}{\rm SO}(3)R^TUR.\]
Then there exist $a,\,n\in\mathbb{R}^{3}$ such that $\mathbf{1}+a\otimes n\in\mathrm{rint}\,K^{qc}$.
\label{corollary:interiorpointcubicaustenite}
\end{corollary}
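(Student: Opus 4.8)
The plan is to reduce the statement to Theorem~\ref{lemma:interiorpointexistence} by means of Lemma~\ref{lemma:configcubicaustenite}. The point is that Theorem~\ref{lemma:interiorpointexistence} already delivers the desired rank-one connection between ${\rm SO}(3)$ and $\mathrm{rint}\,K^{qc}$, \emph{provided} that $K^{qc}$ contains a three-well configuration of the specific diagonal form with parameters $\eta_1=\Delta^{1/3}/\kappa$, $\eta_2=\Delta^{1/3}\kappa^2$, and provided the lattice-parameter inequality \eqref{eq:deltaepsilon} holds. Lemma~\ref{lemma:configcubicaustenite}, on the other hand, guarantees that for $K=\bigcup_{R\in\mathcal{P}^{24}}{\rm SO}(3)R^TUR$ the hull $K^{qc}$ contains a three-well configuration $K_{ct}$ built from $V_1=\mathrm{diag}(\mu,\sqrt{\nu\xi},\sqrt{\nu\xi})$ and its permutations. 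Thus the whole task is to recognise the configuration produced by Lemma~\ref{lemma:configcubicaustenite} as one of those admitted by Theorem~\ref{lemma:interiorpointexistence}, and to check that the inequality \eqref{eq:corollary} is nothing but \eqref{eq:deltaepsilon} for the matching value of $\kappa$.

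First I would match the two three-well configurations entry by entry, setting $\eta_2=\mu$ and $\eta_1=\sqrt{\nu\xi}$. The key algebraic observation is that, since $\mu,\nu,\xi$ are precisely the three distinct elements of the set appearing in Lemma~\ref{lemma:configcubicaustenite}, their product telescopes to $\mu\nu\xi=\Delta$, which is also the determinant constraint $\det V_i=\Delta$. Hence $\nu\xi=\Delta/\mu$ and $\eta_1=\sqrt{\Delta/\mu}$. I would then \emph{define} $\kappa=\mu^{1/2}/\Delta^{1/6}$, equivalently $\mu=\Delta^{1/3}\kappa^2$, and verify directly that this yields $\eta_2=\Delta^{1/3}\kappa^2$ and $\eta_1=\Delta^{1/3}/\kappa$, exactly the form required in Theorem~\ref{lemma:interiorpointexistence}. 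Finally I would evaluate $\kappa=\mu^{1/2}/\Delta^{1/6}$ for each of the three possible values of $\mu$ and each admissible index pair $j\neq k$, checking by a short exponent calculation that the resulting numbers are precisely the elements of $\mathcal{S}(U)$; that is, $\mathcal{S}(U)$ is exactly the image of the three-well ``large parameter'' $\mu$ under the map $\mu\mapsto\mu^{1/2}/\Delta^{1/6}$.

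To conclude, I would invoke the hypothesis of the corollary: there is some $\kappa\in\mathcal{S}(U)$, $\kappa>0$, $\kappa\neq1$, for which \eqref{eq:corollary} holds. By the identification above, this $\kappa$ corresponds to a specific choice of $\mu$ (and of $j,k$), and for that choice Lemma~\ref{lemma:configcubicaustenite} places in $K^{qc}$ a three-well configuration of exactly the form demanded by Theorem~\ref{lemma:interiorpointexistence}, with $\eta_1=\Delta^{1/3}/\kappa$ and $\eta_2=\Delta^{1/3}\kappa^2$. Since \eqref{eq:corollary} is literally \eqref{eq:deltaepsilon} for this $\kappa$, and $\epsilon(\kappa)\in(0,1)$ is supplied by Lemma~\ref{lemma:dolzmannkirchheim2003}, all the hypotheses of Theorem~\ref{lemma:interiorpointexistence} are met, and it produces vectors $a,n\in\mathbb{R}^3$ with $\mathbf{1}+a\otimes n\in\mathrm{rint}\,K^{qc}$.

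I expect the only point requiring care — rather than a genuine obstacle — to be the bookkeeping of the square- and fourth-root exponents when verifying that the three $\kappa$-values coincide with the three families in $\mathcal{S}(U)$, together with the observation that the condition $\kappa\neq1$ is exactly what guarantees $\eta_1\neq\eta_2$, i.e.\ that the configuration supplied by Lemma~\ref{lemma:configcubicaustenite} is a genuine three-well configuration rather than collapsing to the single well $\Delta^{1/3}\mathbf{1}$.
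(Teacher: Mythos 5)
Your proposal is correct and follows essentially the same route as the paper: reduce to Theorem~\ref{lemma:interiorpointexistence} via Lemma~\ref{lemma:configcubicaustenite}, matching each $\kappa\in\mathcal S(U)$ with a choice of $\mu$ through $\mu=\Delta^{1/3}\kappa^{2}$, $\sqrt{\nu\xi}=\Delta^{1/3}/\kappa$, and then noting that \eqref{eq:corollary} is exactly \eqref{eq:deltaepsilon} for that $\kappa$. The only difference is one of presentation: the paper verifies a single representative case $\kappa=\Delta^{1/3}/(\mathrm{cof}\,U^2)^{1/4}_{jj}$ and leaves the rest as analogous, whereas you verify all three families uniformly via the identity $\mu\nu\xi=\Delta$ and the map $\mu\mapsto\mu^{1/2}/\Delta^{1/6}$.
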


\begin{proof}
The proof follows immediately by Theorem~\ref{lemma:interiorpointexistence} and Lemma~\ref{lemma:configcubicaustenite}. For example, suppose that 
(\ref{eq:corollary}) holds for $\kappa=\Delta^{1/3}/(\mathrm{cof}\,U^2)^{1/4}_{jj}$ and some $j\neq k$ fixed; by 
Lemma~\ref{lemma:configcubicaustenite} the three-well configuration in (\ref{eq:Kctmnx}) is contained in $K^{qc}$ where 
$\mu=\Delta/\sqrt{(\mathrm{cof}\,U^2)_{jj}}$, $\nu=\sqrt{(\mathrm{cof}\,U^2)_{jj}/(U^2)_{kk}}$ and $\xi=\sqrt{(U^2)_{kk}}$. But then
\[\mu=\Delta^{1/3}\kappa^2,\,\sqrt{\nu\xi}=\frac{\Delta^{1/3}}{\kappa}\]
and Theorem~\ref{lemma:interiorpointexistence} applies to give the result.
\end{proof}

Unfortunately for the CuAlNi specimen of Seiner the value of $\kappa$ given in Theorem~\ref{lemma:interiorpointexistence} is not large enough for Corollary~\ref{corollary:interiorpointcubicaustenite} to apply and establish the existence of a non-planar interface. To see this note that CuAlNi undergoes a cubic-to-orthorhombic transition and the corresponding transformation strain $U$ is given by
\[
U=\left(\begin{array}{ccc}
\beta & 0 & 0\\ 0 & \frac{\alpha+\gamma}{2} & \frac{\alpha-\gamma}{2}\\ 0 & \frac{\alpha-\gamma}{2} & \frac{\alpha+\gamma}{2}
\end{array}\right).
\]
In accordance with~\cite{50}, let $\alpha = 1.06372$, $\beta = 0.91542$ and 
$\gamma = 1.02368$ be the lattice parameters; then $\Delta^{1/3}=(\alpha\beta\gamma)^{1/3}=0.998935$ and it turns out that the value of $\kappa$ in the set ${\mathcal S}(U)$ that maximizes $(\kappa-1)^2$ 
is $\kappa^*=\beta^{1/3}(\alpha\gamma)^{-1/6}=0.957286$. In particular, $\kappa^*<3/2$ and we may take $\epsilon=(\kappa^*-1)^2/62=2.94277\times 10^{-5}$. But then
$$2.60824\times 10^{-3}=\frac{\vert \Delta^{1/3}-1\vert}{\Delta^{1/3}}\sqrt{\Delta^{4/3}+2\Delta+\Delta^{2/3} + 2}>\epsilon=2.94277\times 10^{-5},$$
so that (\ref{eq:corollary}) does not hold. For other materials $\Delta$ might be closer to 1 so that Corollary~\ref{corollary:interiorpointcubicaustenite} applies.

\section{Concluding remarks}

Our method of constructing a non-planar interface for a set of martensitic wells $K$ depends on the existence of rank-one connections 
between ${\rm SO}(3)$ and $\mathrm{rint}\,K^{qc}$. Since $K^{\rm qc}$ is not known for more than two martensitic energy-wells, establishing the existence of such rank-one connections is a difficult problem. Above, we gave conditions for the existence of such rank-one connections for any transition with cubic austenite, but the method of doing so relied on finding an embedded cubic-to-tetragonal configuration and did not exploit the potentially rich structure of $K^{qc}$. The resulting restriction on the determinant seems much too strong for typical values of the lattice parameters. Possibly, via enlarging the 
neighbourhood of $\mathbf{1}\in\mathbb{R}^{3\times 3}$ from Lemma~\ref{lemma:dolzmannkirchheim2003}, we might be able to predict non-planar 
interfaces for existing alloys or even Seiner's specimen. There could also be entirely new ways of constructing non-planar interfaces which come with less stringent assumptions on the lattice parameters. This is ultimately a problem on quasiconvex hulls and appropriate jump 
conditions, both of which pose deep and interesting questions. Moreover, we note that the analysis does not reveal much about the microstructure corresponding to the 
relative interior point. Based on \cite{dolzmannkirchheim2003}, this interior point must be a (potentially high order) laminate but we cannot be more explicit.

Lastly, we mention that R.~D.~James and others~\cite{cui06,jameszhangway} have extensively investigated the case when the middle eigenvalue $\lambda_2(U_i)$ of the martensitic variants equals 1 and the so-called \textit{cofactors condition} (see e.g.~\cite{jameszhangway}) holds, both theoretically, and experimentally by appropriately `tuning' the lattice parameters of alloys. This work has established a strong connection between these conditions and low thermal hysteresis. Under the cofactor condition, one is able to theoretically construct non-planar interfaces with a pure phase of austenite, 
without the need for a boundary layer; however, this is restricted to this special case and the fact that martensitic twins are 
directly compatible with the austenite~\cite{james_curved}.

\section*{Acknowledgement}

The research of both authors was supported by the EPSRC Science and Innovation award to the Oxford Centre for Nonlinear PDE (EP/E035027/1) and the European Research Council under the European Union's Seventh Framework Programme (FP7/2007-2013) / ERC grant agreement ${\rm n^o}$ 291053. The research of JMB was also supported by a Royal Society Wolfson Research Merit Award. We would like to thank Bernd Kirchheim and Hanu\v s Seiner for very useful discussions.


\begin{thebibliography}{00}
\bibitem{ciarletjr} C.~Amrouche, P.~G.~Ciarlet and P.~Ciarlet Jr, Vector and scalar potentials, Poincar\'e's theorem and Korn's inequality, 
\textit{C.~R.~Math. Acad. Sci. Paris}, 345 (2007), 603--608.

\bibitem{jmbcc97} J.~M.~Ball and C.~Carstensen, Non-classical austenite-martensite interfaces, \textit{J. Phys. IV France} 7 (1997), 35--40

\bibitem{jmbcc_inprep} J.~M.~Ball and C.~Carstensen, \textit{in preparation}.

\bibitem{bj87} J.~M.~Ball and R.~D.~James, Fine phase mixtures as minimizers of energy, {\it ARMA} {\bf 100} no. 1
(1987) 13--52.

\bibitem{bj92} J.~M.~Ball and R.~D.~James, Proposed experimental tests of a theory of fine microstructure and the two-well problem, 
{\it Phil. Tran. R. Soc. Lond. A} 338 (1992), 389--450.

\bibitem{icomat08} J.~M.~Ball, K.~Koumatos, and H.~Seiner, An analysis of non-classical austenite-martensite interfaces in CuAINi, Proceedings 
ICOMAT08, TMS, (2010), 383--390 (at arXiv:1108.6220v1).

\bibitem{bhattacharya1992self} K.~Bhattacharya, Self-accommodation in martensite, \textit{ARMA} 120 (3) (1992), 201--244.

\bibitem{james_curved} X.~Chen, V.~Srivastava, V.~Dabade and R.~D.~James, Study of the cofactor
conditions: conditions of supercompatibility between phases, preprint.

\bibitem{cui06} J.~Cui, Y.~S.~Chu, O.~Famodu, Y.~Furuya, J.~Hattrick-Simpers, R.~D.~ James, A.~Ludwig, S.~Thienhaus, M.~Wuttig, Z.~Zhang and I.~Takeuchi, 
Combinatorial search of thermoelastic shape-memory alloys with extremely small hysteresis width, \textit{Nature materials}, 5 (4) (2006), 286--290.

\bibitem{dolzmannbook} G.~Dolzmann, \textit{Variational methods for crystalline microstructure--analysis and computation}, Lecture Notes in Math., 
Springer-verlag, 2003.

\bibitem{dolzmannkirchheim2003} G.~Dolzmann and B.~Kirchheim, Liquid-like behavior of shape memory alloys, \textit{C. R. Math. Acad. Sci. Paris}, 336 
(5) (2003), 441-446.

\bibitem{iwaniec2002failure} T.~Iwaniec, G.~C.~Verchota and A.~L.~Vogel, The failure of rank-one connections, \textit{ARMA} 163 (2) (2002), 125--169.

\bibitem{jameszhangway} R.~D.~James and Z.~Zhang, A way to search for multiferroic materials with ``unlikely'' combinations of physical properties, 
\textit{Magnetism and structure in functional materials} (2005), 159--175.

\bibitem{lee_manifolds} J.~M.~Lee, Introduction to smooth manifolds, Springer, 2012

\bibitem{mazya} V.~G.~Maz'ya, \textit{Sobolev spaces}, Springer-Verlag, 1985.

\bibitem{mullernotes} S.~M{\"u}ller, {\it Variational models for microstructure and phase transitions}, Lecture Notes in Math., 1999.

\bibitem{50} P.~Sedl\'ak, H.~Seiner, M.~Landa, V.~Nov\'ak, P.~{\v{S}}ittner, Ll.~Ma\~nosa, Elastic constants of bcc austenite and 2H orthorhombic 
martensite in CuAlNi shape memory alloy, \textit{Acta Materialia} 53 (2005), 3643-3661.

\bibitem{seinercurved} H.~Seiner and M.~Landa, Non-classical austenite-martensite interfaces observed in single crystals of Cu--Al--Ni, 
{\it Phase Transitions} {\bf 82} no. 18 (2009) 793--807.
\end{thebibliography}
\end{document}